 \newtheorem{thm}{Theorem}[section]
 \newtheorem{cor}[thm]{Corollary}
 \newtheorem{lem}[thm]{Lemma}
 \newtheorem{prop}[thm]{Proposition}
 \theoremstyle{definition}
 \newtheorem{defn}[thm]{Definition}
 \theoremstyle{remark}
 \newtheorem{rem}[thm]{Remark}
 \numberwithin{equation}{section}
\begin{document}

%
%
%
%
%
%
%
%
%

\title[Decomposition of the set of Banach limits]
 {Decomposition of the set of Banach limits into discrete and continuous subsets}

\author[N. Avdeev]{Nikolai Avdeev}

\address{%
Faculty of Mathematics, Voronezh State University, Voronezh, 394018, Russia}

\email{nickkolok@mail.ru}

\thanks{The work of the first, third and fourth authors was supported by the Theoretical Physics and Mathematics Advancement Foundation ``BASIS''. The work of the second authors was supported by RSF, grant no. 24-21-00220.}
\author[E. Semenov]{Evgenii Semenov}
\address{Faculty of Mathematics, Voronezh State University, Voronezh, 394018, Russia}
\email{semenov@math.vsu.ru}

\author[A. Usachev]{Alexandr Usachev}

\address{%
	School of Mathematics and Statistics, Central South University, Hunan, 410085, China}

\email{dr.alex.usachev@gmail.com}

\author[R. Zvolinskiy]{Roman Zvolinskii}
\address{Faculty of Mathematics, Voronezh State University, Voronezh, 394018, Russia}
\email{roman.zvolinskiy@gmail.com}

\subjclass{Primary 46B45; Secondary 47B37}

\keywords{Banach limits, functional characteristic, invariant Banach limits, cardinality}

\date{January 1, 2004}
\dedicatory{To Professor Fedor Sukochev on the occasion of his 60-th birthday}

\begin{abstract}
The aim of this work is to describe subsets of Banach limits in terms of a certain functional characteristic. We compute radii and cardinalities for some of these subsets.
\end{abstract}

\maketitle
\section{Introduction and preliminaries}

Undoubtedly, singular traces is one of the Fedor Sukochev areas of interest. A second edition of the ``Singular traces'' book (co-authored by him) further supports this statement. 
Singular traces have a fairly long history of interconnection with Banach limits. In the late 90s F. Sukochev with coauthors employed Banach limits in the construction of Dixmier traces on Lorentz spaces \cite{dodds1998symmetric}. In its neatest form this relation was explained in \cite{semenov2015banach}, where it was proved that the set of Banach limits and that of positive normalised traces on the weak-trace class ideal (of compact operators on a Hilbert space) are isometrically isomorphic. In this paper we present new results on the structure of Banach limits. 

Denote by $\ell_\infty$ the set of all bounded sequences $x = (x_1, x_2, \ldots)$ equipped with the uniform norm
$$
\|x\|_{\ell_\infty} = \sup_{n \in \mathbb N} |x_n|
$$
and the usual partial order. Here $\mathbb N$ denotes the set of natural numbers. By $\ell^*_\infty$ we denote the dual of $\ell_\infty$.

\begin{defn}
	A linear functional $B \in \ell^*_\infty$ is said to be a Banach limit if it satisfies the following conditions:
	\begin{enumerate}
	\item[(i)]	$B \ge 0$, that is $Bx \ge 0$ for all $x \in \ell_\infty$ such that $x \ge 0$;
	\item[(ii)] $B\mathrm{1\negthickspace I} = 1$, where $\mathrm{1\negthickspace I} = (1, 1, \dots)$;
		\item[(iii)]
		$BTx = Bx$ for all $x \in \ell_\infty$, where $T$ is the left-shift operator, i.e., $$T(x_1, x_2, x_3, \dots) = (x_2, x_3, x_4, \dots).$$
	\end{enumerate}
\end{defn}
	
	The existence of Banach limits was first announced by S. Mazur~\cite{mazur1929metodach}. The proof using the Hahn-Banach theorem was first presented in the book by S. Banach~\cite{banah2001theory}. From the definition, it follows that
	$$
	\underset{n \to \infty}{\lim\inf} \, x_n \le Bx \le 
	\underset{n \to \infty}{\lim\sup} \, x_n
	$$
	for all $x \in \ell_\infty$, and consequently,
	$$
	Bx = \lim_{n \to \infty} x_n
	$$
	for any convergent sequence. Furthermore,
	$\|B\|_{\ell^*_\infty} = 1$ for any $B \in \mathfrak B$, where $\mathfrak B$
	denotes the set of Banach limits. It is easy to see that $\mathfrak B$ is a closed convex set on the unit sphere of the space $\ell^*_\infty$. G. G. Lorentz proved that for given $x \in \ell_\infty$ and $\lambda \in \mathbb R^1$, the identity $Bx = \lambda$ holds 
	for all $B \in \mathfrak B$ if and only if
$$
\lim_{n \to \infty} \frac 1n \sum_{k = m + 1}^{m + n} x_k = \lambda
$$
uniformly for $m \in \mathbb N$. In this case, the sequence
$x = (x_1, x_2, \ldots)$ is said to be almost convergent to $\lambda \in \mathbb R^1$~\cite{lorentz1948contribution}. The set of all sequences almost converging to $\lambda \in \mathbb R^1$ is denoted by $ac_\lambda$.

For a bounded linear operator $H$ on $\ell_\infty$, a Banach limit $B \in \mathfrak B$ is said to be $H$-invariant if $Bx = BHx$ for all $x \in \ell_\infty$. Invariant Banach limits were first considered in the paper by R. Agnew and A. Morse~\cite{agnew1938extensions}. Later, W. Eberlein proved the existence of Banach limits invariant under regular Hausdorff transformations~\cite{eberlein1950banach}. This approach was further developed in~\cite{semenov2010invariant}. If the operator $H$ satisfies the following conditions:
	\begin{enumerate} 
	\item
	$H \ge 0$ and $H\mathrm{1\negthickspace I} =
	\mathrm{1\negthickspace I}$, 
	\item
	$Hc_0 \subset c_0$,
	\item
	$\underset{j \to \infty}{\lim\sup} \, (A(I - T)x)_j
	\ge 0$ for all $x \in \ell_\infty$,
	$A \in R(H) = \mathrm{conv} \left\{H^k, \, k \in \mathbb N\right\}$, 
\end{enumerate} 
then there exists a $H$-invariant Banach limit. We denote the set of all such Banach limits by $\mathfrak B(H)$. It can be shown that $\mathfrak B(H)$ is a closed convex subset of $\mathfrak B$. The Ces\`aro operator 
$$
(Cx)_n = \frac 1n \sum_{k = 1}^n x_k, \, n \in \mathbb N
$$
and dilation operators
$$
(\sigma_n x) = (\underbrace{x_1, x_1, \ldots, x_1}_n,
\underbrace{x_2, x_2, \ldots, x_2}_n, \ldots), \,
n \in \mathbb N
$$
satisfy conditions 1-3. Therefore, the sets $\mathfrak B(C)$ and $\mathfrak B(\sigma_n)$ are non-empty for any $n \in \mathbb N.$

It is well-known that the space $\ell_\infty$ is lattice isometric to the space $C(\beta\mathbb N)$ of continuous functions on the Stone-{\v C}ech compactification of the set $\mathbb N$, and therefore, it is an $AM$-space with unity \cite[Theorem 12.28]{aliprantis2006positive}. Consequently, its dual $\ell_\infty^*$ is an $AL$-space. Thus, it is lattice isometric to the space $L_1 (\Omega)$ for some set $\Omega$ with measure $\mu$ \cite[Theorem 12.26]{aliprantis2006positive}. Let $\Omega=\Omega_d \cup \Omega_c$ be a partition of the set $\Omega$ into measurable subsets such that the measure $\mu$ restricted to $\Omega_d$ (resp., to $\Omega_c$) is discrete (resp., continuous). Then,
	$$
\ell^*_\infty \approx L_1(\Omega_d) \oplus L_1(\Omega_c).
$$

The kernel of $N(T^*-I)$ of the operator $T^*-I$ is a Riesz space in $\ell_\infty^*$ and, thus, it is an $AL$-space itself. Therefore, a similar representation holds
$$
N(T^*-I) \approx L_1(\Omega_d^1) \oplus L_1(\Omega_c^1).
$$

The space $N(T^*-I)$ consists of all linear functionals on $\ell_\infty$ that are invariant under the shift operator. Therefore, the positive part of its unit sphere $S^+_{N(T^*-I)}$ is nothing but the set $\mathfrak B$ of Banach limits. Thus, the set $\mathfrak B$ can be decomposed into a direct sum:
	$$
\mathfrak B = \mathfrak B_d \oplus \mathfrak B_c,
$$
where $\mathfrak B_d \approx S^+_{L_1(\Omega_d^1)}$ and $\mathfrak B_c \approx S^+_{L_1(\Omega_c^1)}$.
Sets $\mathfrak B_d$ and $\mathfrak B_c$ are referred to as the discrete and continuous subsets of $\mathfrak B$, resp. For any $B \in \mathfrak B$, there exist $B_1 \in \mathfrak B_d$, $B_2 \in \mathfrak B_c$, and $\lambda \in [0, 1]$ such that:
	$$
B = (1 - \lambda) B_1 + \lambda B_2.
$$

According to the Krein-Milman theorem:
	$$
\mathfrak B = \overline{\operatorname{conv}} \operatorname{ext} \mathfrak B,
$$
where $\operatorname{ext} \mathfrak B$ is the set of extreme points of $\mathfrak B$, and the closure of the convex hull is taken in the weak$^*$ topology. 
For any distinct $B_1, B_2, \ldots \in \operatorname{ext} \mathfrak B$, and $\lambda_1, \lambda_2, \ldots \in \ell_1$ the following equality holds:
	$$
\left\|\sum_{k = 1}^\infty \lambda_k B_k\right\|_{\ell^*_\infty} = \sum_{k = 1}^\infty |\lambda_k|.
$$

The main properties of the set $\mathfrak B$ are discussed in the surveys~\cite{semenov2023banach, sofi2021banach, semenov2020geom}, as well as in the monograph~\cite{das2021banach}. Some of the results in this article were previously announced in~\cite{semenov2023banach1}.

\section{A functional characteristic}

For every $B \in \mathfrak B$ let us consider a function defined on $[0, 1]$ by setting:
$$
\gamma(B, t) = B\Bigg(\bigcup^\infty_{n = 1}
[2^n, 2^{n + t})\Bigg),
$$
where $\bigcup^\infty_{n = 1} [2^n, 2^{n + t})$ stands for a sequence $x_k$ equal to
 $1$ for $2^n \le k < 2^{n + t}$, $k \in \mathbb N$ and $0$ otherwise.
This function first appeared in~\cite{semenov2019main}.

Let $\Gamma$ be the set of all non-decreasing functions $f$ on $[0,1]$ such that $f(0)=0$ and $f(1)=1.$ For every $f\in \Gamma$, there exists $B\in \mathfrak B$ such that $\gamma(B,\cdot)=f$~\cite[Lemma 1]{semenov2019main}. For each $f\in \Gamma$, define
$$\mathfrak B(f) = \{B\in \mathfrak B: \, \gamma(B, \cdot) = f\}.$$
It is easy to see that $\mathfrak B(f)$ is a weak$^*$-closed convex subset of $\mathfrak B$.

	\begin{thm}\label{thmcont}
	A Banach limit $B$ belongs to the set $\mathfrak B_c$ if and only if the function
	$\gamma(B, \cdot)$ is continuous.	
\end{thm}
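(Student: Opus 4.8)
The plan is to exploit that for fixed $t$ the quantity $\gamma(B,t)=B(x_t)$, where $x_t=\bigcup_{n\ge1}[2^n,2^{n+t})\in\ell_\infty$, is a \emph{positive linear} functional of $B$. Hence $\gamma$ respects the decomposition $\mathfrak B=\mathfrak B_d\oplus\mathfrak B_c$: writing the band parts of $B$ additively, $\gamma(B,\cdot)=\gamma_d(\cdot)+\gamma_c(\cdot)$, and since $B\ge0$ while $t\mapsto x_t$ is pointwise non-decreasing, both summands are non-decreasing functions on $[0,1]$. At every point the jump of $\gamma(B,\cdot)$ equals the sum of the (non-negative) jumps of $\gamma_d$ and $\gamma_c$. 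Thus the theorem reduces to two assertions: a nonzero discrete part forces a jump, while a purely continuous $B$ produces no jump. To set up the analysis I would pass to the representation $\ell_\infty\cong C(\beta\mathbb N)$, under which each $B\in\mathfrak B$ becomes a probability measure $\mu_B$ on $\beta\mathbb N$, carried by the corona and invariant under the canonical extension $\bar T$ of the shift. Introducing $\varphi(k)=\log_2 k\bmod 1\in\mathbb T=\mathbb R/\mathbb Z$, the identity $x_t=\mathbf 1_{\{k:\varphi(k)\in[0,t)\}}$ holds, and because $\varphi(k+1)-\varphi(k)=\log_2(1+1/k)\to0$, the extension $\bar\varphi\colon\beta\mathbb N\to\mathbb T$ is $\bar T$-invariant on the corona. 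Consequently $\gamma(B,t)=\mu_B(\{\bar\varphi<t\})$ is the distribution function of the pushforward $\nu_B=\bar\varphi_*\mu_B$, and continuity of $\gamma(B,\cdot)$ is equivalent to $\nu_B$ being non-atomic.

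For the implication ``$\gamma(B,\cdot)$ continuous $\Rightarrow B\in\mathfrak B_c$'' (the easier direction), I would argue contrapositively. An extreme point $B_0\in\operatorname{ext}\mathfrak B$ is an atom of the $AL$-space $N(T^*-I)$, so any invariant $\eta$ with $0\le\eta\le\mu_{B_0}$ is a scalar multiple of $\mu_{B_0}$. Were $\bar\varphi$ not $\mu_{B_0}$-a.e.\ constant, some level set $G=\{\bar\varphi<s\}$ would be $\bar T$-invariant with $0<\mu_{B_0}(G)<1$, and $\mu_{B_0}|_G$ would be an invariant measure dominated by but not proportional to $\mu_{B_0}$, a contradiction. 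Hence $\bar\varphi$ equals a constant $c_0$ $\mu_{B_0}$-a.e., so $\nu_{B_0}=\delta_{c_0}$ and $\gamma(B_0,\cdot)$ is the unit step at $c_0$. A nonzero discrete part is, by the isometry recorded in the introduction, an $\ell_1$-combination $\sum_i\lambda_iB_i$ of distinct extreme points with $\sum_i\lambda_i>0$, so $\gamma_d=\sum_i\lambda_i\mathbf 1_{(c_i,1]}$ is a genuine jump function; adding the non-decreasing $\gamma_c$ cannot cancel any of these jumps, so $\gamma(B,\cdot)$ is discontinuous.

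The reverse implication ``$B\in\mathfrak B_c\Rightarrow\gamma(B,\cdot)$ continuous'' is where I expect the real difficulty. Here I must show that a continuous-part functional has no atom in $\nu_B$. The natural attempt: if $\gamma(B,\cdot)$ jumped at $t_0$, then $\mu_B$ would charge the $\bar T$-invariant closed fibre $F=\bar\varphi^{-1}(t_0)$ by some $a>0$, and $a^{-1}\mu_B|_F$ would be an invariant Banach limit carried by $F$; extracting from it an extreme point of $\mathfrak B$ dominated by $B$ would exhibit a nonzero discrete component, contradicting $B\in\mathfrak B_c$. The main obstacle is exactly this extraction, since a priori $\mu_B|_F$ could possess a purely non-atomic decomposition over the extreme points, in which case it would concentrate $\nu_B$ at $t_0$ while remaining in $\mathfrak B_c$. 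Ruling this out forces a genuine study of the shift dynamics on the level sets of $\log_2(\cdot)\bmod 1$: one needs a disintegration of $\mu_B$ along $\bar\varphi$ showing that mass concentrated on a single fibre $F$ must carry an atomic (extreme) summand of $\mathfrak B$. Establishing this structural property of the fibres of $\bar\varphi$ is, I expect, the technical heart of the proof, and the place where the specific arithmetic of the dyadic scale $2^n$ and the properties of $\gamma$ from~\cite{semenov2019main} must be brought to bear.
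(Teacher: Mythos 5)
Your proposal splits into two halves of very unequal status. The half you do complete --- ``$\gamma(B,\cdot)$ continuous $\Rightarrow B\in\mathfrak B_c$'', argued contrapositively by showing that every $B_0\in\operatorname{ext}\mathfrak B$ has $\gamma(B_0,\cdot)$ equal to a unit step (via the ergodicity of atoms of $N(T^*-I)$ with respect to the $\bar T$-invariant map $\bar\varphi=\log_2(\cdot)\bmod 1$ on the corona), so that a nonzero discrete component contributes an uncancellable jump --- is essentially sound, and it is a genuinely different route from the paper's: the paper simply invokes Alekhno's characterization \cite[Theorem 25]{alekhno2015banach} ($B\in\mathfrak B_c$ iff $t\mapsto B\{k:x_k\le t\}$ is continuous for some $x\in\ell_\infty$) and exhibits the sequence $x_k=\log_2 k-n$ for $2^n\le k<2^{n+1}$, for which $B\{k:x_k\le t\}=\gamma(B,t)$. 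Your dynamical argument re-derives, rather than cites, the fact that extreme points have $\{0,1\}$-valued $\gamma$ (this is \cite[Corollary 29]{semenov2014geom} in the paper), which is a legitimate trade of a citation for a page of standard but omitted $\beta\mathbb N$ bookkeeping.

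The other half, ``$B\in\mathfrak B_c\Rightarrow\gamma(B,\cdot)$ continuous'', is not proved: you state the obstacle yourself, namely that $\mu_B$ restricted to the fibre $F=\bar\varphi^{-1}(t_0)$ might a priori be entirely non-atomic in $N(T^*-I)$, i.e.\ remain in $\mathfrak B_c$ while forcing the jump, and you do not resolve it. This is precisely the content of the theorem in that direction, so the gap is not a technicality. The idea you are missing is convex-geometric rather than dynamical: the step function $\varphi_s$ is an \emph{extreme point of $\Gamma$}, so the slice $\mathfrak B(\varphi_s)=\{B:\gamma(B,\cdot)=\varphi_s\}$ is a face of $\mathfrak B$ and $\operatorname{ext}\mathfrak B(\varphi_s)\subset\operatorname{ext}\mathfrak B$; from this the paper concludes that any Banach limit whose $\gamma$ is a pure step lies in $\mathfrak B_d$, and then uses the orthogonality $B_0\perp\mathfrak B_d$ for $B_0\in\mathfrak B_c$ (again \cite[Theorem 25]{alekhno2015banach}), which forces $\|B_0-\alpha B_3\|_{\ell_\infty^*}=1+|\alpha|$ for every extreme $B_3$ and every $\alpha$, contradicting the strict inequality available for small $\alpha$ when $B_0$ is a countable convex combination of extreme points. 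Without an argument of this kind (or some substitute for the disintegration-along-fibres step you flag), your proof of the forward implication does not close.
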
 

\begin{proof}
	Suppose, to the contrary, that $B\in \mathfrak B_c$ and the function $\gamma(B, \cdot)$ is discontinuous. Since it is non-decreasing, it has a jump $a > 0$ at some point $s \in [0, 1]$. For the sake of clarity, assume that $s \in (0, 1)$ and that $\gamma(B, \cdot)$ is continuous from the right at $s$. Then
		\begin{equation*} \label{eq:1}
		\gamma(B, t) = af(t) + (1 - a) g(t),
	\end{equation*}
	where
	$$
	f(t) = \left\{
	\begin{array}{cl}
		0, & 0 \le t < s \\
		1, & s \le t \le 1.
	\end{array}
	\right. 
	$$
	
	We shall show that
		$$
	\operatorname{ext} \mathfrak B(f) = \operatorname{ext}
	\mathfrak B \cap \mathfrak B(f).
	$$
	To this end, it is sufficient to show that
	\begin{equation} \label{eq:2}
		\operatorname{ext} \mathfrak B(f) \subset
		\operatorname{ext} \mathfrak B. 
	\end{equation}
	Suppose $B_0 = \frac 12 (B_1 + B_2)$, where $B_0 \in \operatorname{ext} \mathfrak B(f)$, and $B_1, B_2 \in \mathfrak B$. From this, we have $f(t) = \gamma(B_0, t) = \frac 12 \big(\gamma(B_1, t) + \gamma(B_2, t)\big)$. Since $\gamma(B_0, t)$ is an extreme point of $\Gamma$, it follows that $\gamma(B_0, t) = \gamma(B_1, t) = \gamma(B_2, t)$. This implies that $B_1$ and $B_2$ are in $\mathfrak B(f)$. Since $B_0 \in \operatorname{ext} \mathfrak B(f)$, it follows that $B_0 \in \operatorname{ext} \mathfrak B$, which proves (\ref{eq:2}).
	
	From \eqref{eq:2}, it follows that
		$$
	\mathfrak B(f) \subset \overline{\operatorname{conv}}
	\operatorname{ext} \mathfrak B = \mathfrak B_d,
	$$
	where the closure is taken in the $\ell_\infty$ norm. In particular, $B_0$ belongs to a convex combination of a finite or countable number of $B_3, B_4, \ldots \in \operatorname{ext} \mathfrak B$. Then, for a sufficiently small $\alpha>0$, we have $\|B_0 - \alpha B_3\|_{\ell^*_\infty} < 1$. From the assumption that $B_0 \in \mathfrak B_c$, it follows that $B_0 \perp \mathfrak B_d$ \cite[Theorem 25]{alekhno2015banach}, and $\|B_0 - \alpha B_3\|_{\ell^*_\infty} = 1 + |\alpha|$ for all $\alpha$. The obtained contradiction proves that $B \in \mathfrak B_c$ implies the continuity of the function $\gamma(B, \cdot)$.
	
	Conversely, let $\gamma(B, \cdot)$ be continuous. It follows from \cite[Theorem 25]{alekhno2015banach}, that a Banach limit $B$ belongs to the set $\mathfrak B_c$ if and only if the function $t\mapsto B\{k\in \mathbb N: x_k\le t\}$ is continuous for some sequence $x\in \ell_\infty$.
	
	Set $x_1=2$ and $x_k=\log_2 k -n$ for $2^n\le k< 2^{n+1}$, $n=1,2, \dots$. Then
	$$\{k\in \mathbb N: x_k\le t\}= \bigcup_{n\ge 1} \{k\in \mathbb N: \log_2 k -n \le t\}= \bigcup_{n\ge1} [2^n, 2^{n+t}].$$
	
	Since the sets $\bigcup_{n\ge1} [2^n, 2^{n+t})$ and $\bigcup_{n\ge1} [2^n, 2^{n+t}]$ differ only by the set $\{2^{n+t}\}_{n=1}^\infty$ (the characteristic function of which almost converges to zero), for all $t\in[0,1]$ we have $\gamma(B,t)=B\{k\in \mathbb N: x_k\le t\}$ for the chosen sequence $x\in \ell_\infty$. The claim follows from the continuity of the function $\gamma(B,\cdot)$.
\end{proof}

\begin{thm}\label{thmdiscr}
	A Banach limit $B$ belongs to the set $\mathfrak B_d$ if and only if the function
	$\gamma(B, \cdot)$ belongs to the closed convex hull of functions $\varphi_s$, where
	$$
	\varphi_s (t) = \left\{
	\begin{array}{cc}
		0, & 0 \le t < s \\
		1, & s \le t \le 1
	\end{array}
	\right.
	\text{ or }
	\left\{
	\begin{array}{cl}
		0, & 0 \le t \le s \\
		1, & s < t \le 1.
	\end{array}
	\right.
	$$
	($0 < s \le 1$ in the first case and $0 \le s < 1$ in the second one).	
\end{thm}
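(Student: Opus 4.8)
The plan is to pass from functions to measures. Each $f\in\Gamma$ is the cumulative distribution function of a unique Borel probability measure on $[0,1]$; write $\mu_B$ for the measure attached to $\gamma(B,\cdot)$. The two variants of $\varphi_s$ are precisely the two one-sided distribution functions of the Dirac mass $\delta_s$, so the jump-value ambiguity is immaterial and $\varphi_s\leftrightarrow\delta_s$. Finite convex combinations of the $\varphi_s$ correspond to finitely supported atomic measures, and, reading the closure in the total-variation norm of the associated measures (the natural one for the $AL$-space $\ell_\infty^*\approx L_1(\Omega)$, under which the purely atomic measures form a closed band in $M[0,1]$), the set $\overline{\operatorname{conv}}\{\varphi_s\}$ is exactly the family of pure-jump distribution functions, i.e. the distribution functions of purely atomic probability measures. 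Thus the theorem is equivalent to the assertion that $B\in\mathfrak B_d$ if and only if $\mu_B$ is purely atomic. Throughout I use that $B\mapsto\mu_B$ is affine and that, since $\gamma(B,t)=B(y_t)$ for the fixed element $y_t=\bigcup_{n\ge1}[2^n,2^{n+t})\in\ell_\infty$, the map $B\mapsto\gamma(B,t)$ is positive, linear and $\|\cdot\|_{\ell_\infty^*}$-continuous with $|\gamma(B,t)|\le\|B\|$.

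For the implication ``$\mu_B$ purely atomic $\Rightarrow B\in\mathfrak B_d$'' I would argue through the decomposition recalled in the preliminaries. Write $B=(1-\lambda)B_1+\lambda B_2$ with $B_1\in\mathfrak B_d$, $B_2\in\mathfrak B_c$ and $\lambda\in[0,1]$. Affinity gives $\mu_B=(1-\lambda)\mu_{B_1}+\lambda\mu_{B_2}$, and by Theorem \ref{thmcont} the function $\gamma(B_2,\cdot)$ is continuous, so $\mu_{B_2}$ is non-atomic. Since the non-atomic part of a sum of positive measures is the sum of the non-atomic parts, the non-atomic part of $\mu_B$ equals $(1-\lambda)$ times the non-atomic part of $\mu_{B_1}$ plus $\lambda\mu_{B_2}$; if $\mu_B$ is purely atomic this vanishes, and as both summands are positive we get $\lambda\mu_{B_2}=0$. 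Because $\mu_{B_2}$ is a probability measure, $\lambda=0$ and $B=B_1\in\mathfrak B_d$. This direction uses only Theorem \ref{thmcont} together with the uniqueness of the atomic/non-atomic splitting.

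The reverse implication ``$B\in\mathfrak B_d\Rightarrow\mu_B$ purely atomic'' is the hard part, and I would attack it in the spirit of the proof of Theorem \ref{thmcont}. One route is to invoke \cite[Theorem 25]{alekhno2015banach} in its discrete form (the companion of the continuous statement already used), applied to the explicit sequence $x$ with $x_1=2$ and $x_k=\log_2 k-n$ for $2^n\le k<2^{n+1}$, for which $B\{k:x_k\le t\}=\gamma(B,t)$; this matches the discreteness of $B$ with the pure-jump character of its distribution function. A self-contained route is to reduce to extreme points: since $\mathfrak B_d=\overline{\operatorname{conv}}\operatorname{ext}\mathfrak B$ in the $\ell_\infty^*$-norm, since $\gamma(\cdot,t)$ is norm-continuous, and since $\|\sum_k\lambda_k E_k\|=\sum_k|\lambda_k|$ for distinct $E_k\in\operatorname{ext}\mathfrak B$, a norm-convergent representation $B=\sum_k\lambda_k E_k$ yields $\gamma(B,\cdot)=\sum_k\lambda_k\gamma(E_k,\cdot)$ with uniform convergence; hence it suffices to show $\gamma(E,\cdot)=\varphi_s$ for every $E\in\operatorname{ext}\mathfrak B$, which forces $\mu_B=\sum_k\lambda_k\delta_{s_k}$ to be atomic.

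I expect the crux to be exactly this last claim, that an extreme Banach limit has a single-step profile. Because $\varphi_s$ is an extreme point of $\Gamma$, the set $\mathfrak B(\varphi_s)$ is a face of $\mathfrak B$ and $\operatorname{ext}\mathfrak B(\varphi_s)=\operatorname{ext}\mathfrak B\cap\mathfrak B(\varphi_s)$, exactly as in the proof of Theorem \ref{thmcont}; what remains is to show that every $E\in\operatorname{ext}\mathfrak B$ lies in some $\mathfrak B(\varphi_s)$. Equivalently, given a splitting $\gamma(E,\cdot)=\frac12(g_1+g_2)$ with $g_1\ne g_2$ in $\Gamma$, one must \emph{lift} it to a splitting $E=\frac12(E_1+E_2)$ with $E_1,E_2\in\mathfrak B$ and $\gamma(E_i,\cdot)=g_i$, contradicting extremality. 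Since $\gamma$ is highly non-injective, this lifting is the genuine obstacle: it amounts to producing $E_i$ inside the order interval $[0,2E]$ realizing the prescribed values $E_i(y_t)=g_i(t)$, and I expect it to require either the disintegration furnished by \cite[Theorem 25]{alekhno2015banach} or an explicit such construction.
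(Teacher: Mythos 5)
Your reduction of the ``only if'' direction to the claim that $\gamma(E,\cdot)=\varphi_s$ for every $E\in\operatorname{ext}\mathfrak B$ is exactly right, but the proposal stops at the point where the paper actually does the work, and the route you sketch for closing it (lifting a splitting $\gamma(E,\cdot)=\frac12(g_1+g_2)$ in $\Gamma$ to a splitting $E=\frac12(E_1+E_2)$ in $\mathfrak B$) is not how it is done and would be genuinely hard: as you yourself note, $\gamma$ is far from injective and there is no a priori reason that a decomposition of the scalar data can be realized inside the order interval $[0,2E]$. The missing ingredient is a known structural fact about extreme Banach limits: by \cite[Corollary 29]{semenov2014geom}, every $E\in\operatorname{ext}\mathfrak B$ satisfies $E(\chi_A)\in\{0,1\}$ for every $A\subset\mathbb N$. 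Applied to $A=\bigcup_{n\ge1}[2^n,2^{n+t})$ this gives $\gamma(E,t)\in\{0,1\}$ for all $t$, and since $\gamma(E,\cdot)$ is non-decreasing with $\gamma(E,0)=0$ and $\gamma(E,1)=1$, it must equal some $\varphi_s$. With that citation in hand, your reduction (write $B=\sum_k\lambda_k E_k$ with norm convergence and pass the sum through $\gamma(\cdot,t)$) finishes the direction exactly as the paper does. Without it, the declared crux is unproved, so the proposal has a genuine gap.

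For the converse direction your argument is correct and in fact differs from (and is tighter than) the paper's. The paper asserts that if $\gamma(B,\cdot)=\sum_i\alpha_i\varphi_{s_i}$ then $B$ itself is a convex combination of Banach limits lying in $\mathfrak B_d$ --- a lifting step it does not justify in detail --- whereas you avoid lifting altogether: decompose $B=(1-\lambda)B_1+\lambda B_2$ with $B_1\in\mathfrak B_d$, $B_2\in\mathfrak B_c$, invoke Theorem \ref{thmcont} to see that $\mu_{B_2}$ is non-atomic, and use additivity and uniqueness of the atomic/non-atomic splitting of positive measures to force $\lambda=0$. Your reading of ``closed convex hull'' as the total-variation closure of convex combinations of Dirac masses (equivalently, countable convex combinations of the $\varphi_s$, i.e., purely atomic $\mu_B$) is also the right one and matches what the paper's proof actually uses; the uniform-norm closed convex hull of the $\varphi_s$ would contain all of $\Gamma$ and trivialize the statement. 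In summary: one direction of your proposal is a valid alternative proof, the other has an unfilled hole precisely at the step the paper settles by citing \cite[Corollary 29]{semenov2014geom}.
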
 

\begin{proof}
	If $B \in \mathfrak B_d$, then
	$$
	B = \sum_{i = 1}^\infty \alpha_i B_i
	$$ 
	for some
	$B_i \in \operatorname{ext \mathfrak B}$, $\alpha_i \ge 0$,
	$\sum_{i = 1}^\infty \alpha_i = 1$. Hence,
	$$
	\gamma(B, t) = 
	\Bigg(\sum_{i = 1}^\infty \alpha_i B_i\Bigg)
	\Bigg(\bigcup^\infty_{n = 1} [2^n, 2^{n + t})\Bigg) =
	\sum_{i = 1}^\infty \alpha_i
	B_i \Bigg(\bigcup^\infty_{n = 1} [2^n, 2^{n + t})\Bigg).
	$$
	By \cite[Corollary 29]{semenov2014geom} we have that
	$B_i \big(\bigcup^\infty_{n = 1} [2^n, 2^{n + t})\big) = 0$
	or $1$ for every $i = 1, 2, \ldots$ and
	for every $t \in [0, 1]$. Thus, $B_i \big(\bigcup^\infty_{n = 1}
	[2^n, 2^{n + t})\big) = \varphi_{s_i} (t)$ and 
	$$
	\gamma(B, t) = \sum_{i = 1}^\infty \alpha_i \varphi_{s_i} (t).
	$$
	This proves the first part of Theorem.
	
	As it was proved in the first part of the proof of Theorem \ref{thmcont}, if the equality
	$\gamma(B, t) = \varphi_s (t)$ holds for some $s \in [0, 1]$ and all
	$t \in [0, 1]$, then $B \in \mathfrak B_d$. Let $\gamma(B, \cdot)$
	be a convex combination of a finite number of functions $\varphi_{s_i}$,
	$1 \le i \le m$. Then $B$ is a convex combination of finite number of Banach limits from $\mathfrak B_d$. Since $\mathfrak B_d$ is convex, it follows that $B$ belongs to $\mathfrak B_d$. If
	$\gamma(B, \cdot)$ is a convex combination of a countable number of functions
	$\varphi_{s_i}$, then $B \in \mathfrak B_d$, since the set
	$\mathfrak B_d$ is closed.	
\end{proof} 

Thus, for every $B \in \mathfrak B_d$, the function $\gamma(B, \cdot)$ has at most countable number of discontinuities. At each such point, the function $\gamma(B, \cdot)$ may be discontinuous from the left, from the right, or both.

For any $B \in \mathfrak B$, the function $\gamma(B, \cdot)$ is non-decreasing. Since a non-decreasing function is differentiable almost everywhere, then it makes sense to speak about the derivative of the function $\gamma(B, \cdot)$.

\begin{thm}\label{thmder}
	If $B \in \mathfrak B_d$, then $\gamma(B, t)' = 0$
	for almost every $t\in[0,1]$.
\end{thm}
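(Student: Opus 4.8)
The plan is to exploit the structural description obtained in Theorem \ref{thmdiscr}. For $B \in \mathfrak B_d$ that theorem writes $\gamma(B,\cdot)$ as a finite or countable convex combination of the jump functions $\varphi_{s_i}$,
$$
\gamma(B, t) = \sum_{i=1}^\infty \alpha_i \varphi_{s_i}(t), \qquad \alpha_i \ge 0, \quad \sum_{i=1}^\infty \alpha_i = 1.
$$
Each $\varphi_{s_i}$ is locally constant away from its single jump point $s_i$, so $\varphi_{s_i}'(t) = 0$ for every $t \ne s_i$. The whole point is to pass from the vanishing of each summand's derivative to the vanishing of the derivative of the sum; since term-by-term differentiation of an infinite series is not valid in general, this interchange is where the main difficulty lies.

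First I would split off the tail. Set $S_N(t) = \sum_{i=1}^N \alpha_i \varphi_{s_i}(t)$ and $R_N(t) = \gamma(B,t) - S_N(t) = \sum_{i>N} \alpha_i \varphi_{s_i}(t)$. The partial sum $S_N$ is a step function with jumps only in the finite set $\{s_1,\dots,s_N\}$, hence $S_N'(t) = 0$ off this finite (thus null) set. The tail $R_N$ is a sum of nondecreasing functions, hence itself nondecreasing, and since $\varphi_{s_i}(1) - \varphi_{s_i}(0) = 1$ for every $i$, its total increase is $R_N(1) - R_N(0) = \sum_{i>N} \alpha_i$.

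Next, since $\gamma(B,\cdot)$, $S_N$ and $R_N$ are all nondecreasing they are differentiable almost everywhere, and at almost every $t$ the identity $\gamma(B,t)' = S_N'(t) + R_N'(t) = R_N'(t)$ holds. Now I would invoke the classical inequality $\int_0^1 F'(t)\,dt \le F(1) - F(0)$, valid for any nondecreasing $F$, applied to $F = R_N$:
$$
0 \le \int_0^1 \gamma(B,t)'\,dt = \int_0^1 R_N'(t)\,dt \le R_N(1) - R_N(0) = \sum_{i>N} \alpha_i .
$$
Letting $N \to \infty$, the right-hand side tends to $0$, so $\int_0^1 \gamma(B,t)'\,dt = 0$. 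As $\gamma(B,\cdot)$ is nondecreasing we have $\gamma(B,t)' \ge 0$ almost everywhere, and a nonnegative function with vanishing integral is zero almost everywhere; thus $\gamma(B,t)' = 0$ for almost every $t \in [0,1]$.

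As an alternative to the tail estimate, one could cite Fubini's theorem on term-by-term differentiation of a convergent series of monotone functions, which yields directly $\gamma(B,t)' = \sum_i \alpha_i \varphi_{s_i}'(t) = 0$ almost everywhere (the exceptional set $\{s_i\}$ being countable, hence null). I would favour the self-contained tail argument above, since its only external ingredient is the standard monotone integral inequality, and I expect the interchange of limit and differentiation to be the sole genuinely delicate point.
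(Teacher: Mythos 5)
Your proposal is correct and follows essentially the same route as the paper: both start from the countable convex decomposition $\gamma(B,t)=\sum_i \alpha_i\varphi_{s_i}(t)$ (the paper phrases the tail as $\gamma\bigl(\sum_{i\ge m}\lambda_i B_i,\cdot\bigr)$, which is the same object as your $R_N$), observe that the finite partial sum has vanishing derivative off a finite set, and then apply the classical inequality $\int_0^1 F'\,dt\le F(1)-F(0)$ for nondecreasing $F$ to the tail, whose total increase $\sum_{i>N}\alpha_i$ tends to zero. The only cosmetic difference is that you bound the tail's increase directly via $\varphi_{s_i}(1)-\varphi_{s_i}(0)=1$ while the paper passes through the norm $\bigl\|\sum_{i\ge m}\lambda_i B_i\bigr\|_{\ell_\infty^*}$; the substance is identical.
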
 

\begin{proof}
The assertion holds when $B$ is a convex combination of a finite number of extreme points of $\mathfrak B$, that is
	$$
	B = \sum_{i = 1}^m \lambda_i B_i, \
	\lambda_i > 0, \
	\sum_{i = 1}^m \lambda_i = 1, \
	B_i \in \operatorname{ext} \mathfrak B.
	$$
 Using this statement for an arbitrary $B \in \mathfrak B_d$, we have
	$$
	\frac{d\gamma}{dt}(B, t) = \frac{d\gamma}{dt}
	\bigg(\sum_{i = m}^\infty \lambda_i B_i, t\Bigg)
	$$
up to a finite number of points and for any $m \in \mathbb N$. By \cite[Chapter 8, Section 2, Theorem 5]{natanson1957}, the inequalities
	
	\begin{align*}
		0 \le \int\limits_0^1 \gamma
		\Bigg(\sum_{i = m}^\infty \lambda_i B_i, t\Bigg)' dt &\le
		\gamma \Bigg(\sum_{i = m}^\infty \lambda_i B_i, 1\Bigg) -
		\gamma \Bigg(\sum_{i = m}^\infty \lambda_i B_i, 0\Bigg) \\
		&\le
		\left\|\sum_{i = m}^\infty \lambda_i B_i\right\|_{\ell^*_\infty}
		\le \sum_{i = m}^\infty \lambda_i
	\end{align*}		 
	hold for each $m \in \mathbb N$. Therefore,
		$$
		\int\limits_0^1 \gamma(B, t)' dt = 0.$$

	Since the function $\gamma(B, \cdot)$ is non-negative, we conclude that $\gamma(B, t)' = 0$ almost everywhere.
\end{proof} 

The converse of Theorem \ref{thmder} does not hold. Recall that a continuous, non-decreasing, and non-constant function $h$ is called singular if $h' = 0$ almost everywhere. The most well-known example of such a function is the Cantor function. If $h(0) = 0$, $h(1) = 1$, and $h$ is monotone on $[0, 1]$, then by \cite[Proposition 2]{semenov2019main}, there exists $B \in \mathfrak B$ such that $\gamma(B, t) = h(t).$ In particular, it can be additionally assumed that $h$ is singular. In this case, Theorem \ref{thmcont} implies that $B \in \mathfrak B_c$.
Thus, we the following result holds:

\begin{cor}\label{corsing}
	If $B \in \mathfrak B$ and $\gamma(B, \cdot)$ is a singular function, then $B \in \mathfrak B_c$.
\end{cor}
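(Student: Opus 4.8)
The plan is to reduce the corollary directly to Theorem~\ref{thmcont}, since the hypothesis that $\gamma(B,\cdot)$ is singular already contains, by definition, exactly the property that theorem requires. Recall that a singular function is in particular continuous (it is continuous, non-decreasing, non-constant, and has derivative zero almost everywhere). Thus the only feature of $\gamma(B,\cdot)$ that I will actually use is its continuity; the conditions $\gamma(B,\cdot)'=0$ a.e.\ and non-constancy play no role in deducing membership in $\mathfrak B_c$.

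First I would record that, by hypothesis, $\gamma(B,\cdot)$ is a singular function on $[0,1]$ and hence continuous. Then I would invoke the ``if'' direction of Theorem~\ref{thmcont}, which asserts that continuity of $\gamma(B,\cdot)$ forces $B\in\mathfrak B_c$. Applying it yields the conclusion at once.

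Since the whole argument consists of citing Theorem~\ref{thmcont} after the trivial observation that singularity entails continuity, there is no genuine obstacle here: the substantive work was already carried out in the proof of Theorem~\ref{thmcont}. What makes the statement worth isolating as a corollary is its interplay with Theorem~\ref{thmder}. The latter shows $\gamma(B,\cdot)'=0$ a.e.\ for every $B\in\mathfrak B_d$, and one might hope that this derivative condition characterises $\mathfrak B_d$. The corollary, combined with the existence (via \cite[Proposition 2]{semenov2019main}) of a Banach limit whose functional characteristic equals a prescribed singular function such as the Cantor function, exhibits a $B\in\mathfrak B_c$ with $\gamma(B,\cdot)'=0$ a.e., thereby showing that the converse of Theorem~\ref{thmder} fails. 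Consequently the only point requiring any care is to cite the correct existence statement, so that the witnessing example is non-vacuous.
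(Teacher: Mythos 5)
Your proposal is correct and is essentially the paper's own argument: the corollary follows immediately from the ``if'' direction of Theorem~\ref{thmcont}, since a singular function is by definition continuous, and the remaining properties (vanishing derivative a.e., non-constancy) are irrelevant to the deduction. Your surrounding remarks about the role of the corollary as a counterexample to the converse of Theorem~\ref{thmder} also match the paper's framing.
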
 

\begin{rem}
	For every $a>1$ let us introduce a function 
	$$\gamma_a(B, t)=B \left(\bigcup_{n=1}^\infty [a^n, a^{n + 1}) \right), \ t\in[0,1].$$ 
	Theorems \ref{thmcont}-\ref{thmder} and Corollary \ref{corsing} hold for the function $\gamma_a(B, \cdot)$ as well. It follows from \cite[Proposition 20]{semenov2019main} that $\gamma_a(B, t)=t$ for every $a>1$  and $t\in[0,1]$, provided that $B\in \mathfrak B(C)$.
\end{rem}

\section{Ces\`aro invariant Banach limits}

In this section, we will establish several new properties of the set $\mathfrak B(C)$ of Banach limits invariant under the Ces\`aro operator. As it was mentioned in Introduction, this set is non-empty. However, not every Banach limit is Ces\`aro invariant. On the other hand every Banach limit is trivially invariant under the operator $A$ if $A - I : \ell_\infty \to ac_0$. It is worth noting that for any Banach limit, there exists a nontrivial operator $G_B : \ell_\infty \to \ell_\infty$ such that
	$\mathfrak B(G_B) = \{B\}$. Indeed, one can take $G_Bx:= Bx \cdot \mathrm{1\negthickspace I}.$

Let $\{n_k\}^\infty_{k = 1}$ be a strictly increasing sequence of natural numbers. We denote by $V$ the set of sequences of the form:
\begin{equation} \label{eq:11}
	x = (x_i) = \left\{
	\begin{array}{cr}
		1, & n_{2k} \le i < n_{2k + 1} \\
		0, & n_{2k + 1} \le i < n_{2k + 2}
	\end{array}
	\right., \ i, k \in \mathbb N, 
\end{equation}
where $\{n_k\}^\infty_{k = 1}$ satisfies the condition:
$$
\lim_{k \to \infty} (n_{k + 1} - n_k) = \infty.
$$
By $V_0$ we denote is a subset of $V$ where a stronger condition:
\begin{equation} \label{eq:33}
	\lim_{k \to \infty} n_{k + 1} / n_k = \infty
\end{equation}
holds.

\begin{lem}\label{lemBCorbit}
	If $x \in V_0$, then
	\begin{equation*} \label{eq:22}
		\{Bx : \, B \in \mathfrak B(C)\} = [0, 1].
	\end{equation*}	
\end{lem}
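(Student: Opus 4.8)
The plan is to exploit that $\{Bx:B\in\mathfrak B(C)\}$ is a convex subset of $[0,1]$ (the map $B\mapsto Bx$ is linear and $\mathfrak B(C)$ is convex), hence an interval; so it suffices to produce $B_1,B_2\in\mathfrak B(C)$ with $B_1x=1$ and $B_2x=0$. Moreover $\mathrm{1\negthickspace I}-x$ again lies in $V_0$ (its $1$-blocks are the former $0$-blocks, and condition \eqref{eq:33} is symmetric in the two families), while $B(\mathrm{1\negthickspace I}-x)=1-Bx$. Hence it is enough to prove the single statement: for every $y\in V_0$ there exists $B\in\mathfrak B(C)$ with $By=1$; applying it to $y=x$ and to $y=\mathrm{1\negthickspace I}-x$ yields the two required endpoints.

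To manufacture Cesàro-invariant Banach limits with a prescribed value I would use Hahn--Banach against the sublinear functional
$$ p(z)=\inf_{A\in R(C)}\ \limsup_{n\to\infty}(Az)_n,\qquad z\in\ell_\infty. $$
Positive homogeneity is clear, and subadditivity follows because $R(C)=\mathrm{conv}\{C^k\}$ is a commutative semigroup closed under multiplication: given near-optimal $A_1,A_2$ for $z,w$, the product $A_1A_2\in R(C)$ is simultaneously near-optimal for both, since a positive unital averaging operator never increases $\limsup$. The point is that every linear $B\le p$ is automatically an element of $\mathfrak B(C)$: positivity and $B\mathrm{1\negthickspace I}=1$ are immediate; shift-invariance follows from $p(\pm(I-T)z)\le0$, seen by taking $A=C$ and noting $(C(I-T)z)_n=(z_1-z_{n+1})/n\to0$; and $C$-invariance follows from $p(\pm(I-C)z)\le0$, seen by taking $A_N=\tfrac1N\sum_{k=0}^{N-1}C^k$, so that $A_N(I-C)=\tfrac1N(I-C^N)$ and $\limsup_n(A_N(I-C)z)_n\le \tfrac{2\|z\|_\infty}{N}\to0$. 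Consequently $\sup_{B\in\mathfrak B(C)}By=p(y)$, the supremum being attained by a functional furnished by Hahn--Banach. The whole problem thus reduces to the single identity $p(y)=1$ for $y\in V_0$.

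The heart of the argument is therefore to show $\limsup_n(Ay)_n=1$ for every $A\in R(C)$. Since each $(Ay)_n\le1$ and $A=\sum_j c_jC^{m_j}$ is a finite convex combination, it suffices to evaluate along the subsequence $n=n_{2k+1}-1$ (the right end of a $1$-block) and to prove, for each fixed power $m$, that $(C^m y)_{n_{2k+1}-1}\to1$ as $k\to\infty$; the finite combination then converges termwise to $\sum_j c_j=1$. Writing $M_k=n_{2k}$, $N_k=n_{2k+1}$ and denoting by $\chi_I$ the indicator of $I$, I would bound $y$ from below by the single-block indicator $\chi_{[M_k,N_k)}$ and use positivity of $C^m$ to get $(C^my)_{N_k}\ge\big(C^m\chi_{[1,N_k)}\big)_{N_k}-\big(C^m\chi_{[1,M_k)}\big)_{N_k}$. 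The first term tends to $1$ (it equals $1-(C^m\chi_{[N_k,\infty)})_{N_k}$, and the subtracted quantity is $O(1/N_k)$), so everything comes down to the estimate $(C^m\chi_{[1,M_k)})_{N_k}\to0$.

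The main obstacle is precisely this last estimate, and it is where the $V_0$ hypothesis \eqref{eq:33} is indispensable: a crude induction on $m$ gives
$$ (C^m\chi_{[1,L)})_N\le \frac{L}{N}\,P_m\!\big(\log(N/L)\big)\qquad(N\ge L), $$
where $P_m$ is a fixed polynomial of degree $m-1$ (each application of $C$ turns a $\tfrac{L}{n}P(\log(n/L))$ bound into $\tfrac{L}{N}$ times the integral of $P$, raising the degree by one). With $L=M_k$, $N=N_k$ and $M_k/N_k\to0$ forced by \eqref{eq:33}, the factor $M_k/N_k$ beats the polylogarithm $P_m(\log(N_k/M_k))$, so the bound tends to $0$ for each fixed $m$; under the merely additive gap condition defining $V$ this fails, which is exactly why one must work in $V_0$. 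The only delicate bookkeeping is keeping these estimates uniform enough to pass from a single power $C^m$ to an arbitrary $A\in R(C)$, but finiteness of the convex combination makes this harmless. Combining the three steps gives $p(y)=1$, hence $B_1,B_2$ as required, and the convexity of the image upgrades $\{0,1\}\subseteq\{Bx:B\in\mathfrak B(C)\}\subseteq[0,1]$ to the claimed equality.
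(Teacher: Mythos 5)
Your proof is correct, but it takes a much more self-contained route than the paper's. The paper disposes of the lemma in three lines: it quotes a structure theorem from Alekhno--Semenov--Sukochev--Usachev identifying $\{By : B\in\mathfrak B(C)\}$ with the interval $\left[\lim_m\liminf_j(C^my)_j,\ \lim_m\limsup_j(C^my)_j\right]$, and then simply asserts, without computation, that \eqref{eq:33} forces $\liminf_j(C^mx)_j=0$ and $\limsup_j(C^mx)_j=1$ for every $m$. You instead re-derive the half of that theorem you actually need -- the Hahn--Banach argument against $p(z)=\inf_{A\in R(C)}\limsup_n(Az)_n$ gives an attained supremum $\sup_{B\in\mathfrak B(C)}By=p(y)$, and the complement trick $y\mapsto\mathrm{1\negthickspace I}-y$ supplies the lower endpoint, so you never need the two-sided interval description -- and you also supply the quantitative content the paper leaves implicit, namely the bound $(C^m\chi_{[1,L)})_N\le (L/N)\,P_m(\log(N/L))$, which is exactly the point where \eqref{eq:33}, rather than the weaker gap condition defining $V$, is used. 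The trade-off is clear: the paper's proof is shorter and leans on a known general formula, while yours is independent of that citation and makes visible both why the hypothesis is needed and why it cannot be weakened. One small point of hygiene: the monotonicity $\limsup_n(A_1A_2z)_n\le\limsup_n(A_1z)_n$ underlying subadditivity of $p$ uses not only positivity and unitality of $A_2$ but also $A_2c_0\subset c_0$; this holds for every element of $R(C)$, so nothing breaks, but it deserves an explicit word.
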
 

\begin{proof}
	The assumption \eqref{eq:33}
	implies
	\begin{align*}
		&\underset{j \to \infty}{\lim\inf} \, (C^m x)_j = 0, \quad \underset{j \to \infty}{\lim\sup} \, (C^m x)_j = 1 
	\end{align*}
	for every $m \in \mathbb N$. It was proved in~\cite{alehno2016por}, that
	\begin{equation*} \label{eq:66}
		\{By: \, B \in \mathfrak B(C)\} =
		\left[\lim_{m \to \infty} \underset{j \to \infty}{\lim\inf} \, (C^m y)_j,
		\lim_{m \to \infty} \underset{j \to \infty}{\lim\sup} \, (C^m y)_j\right].
	\end{equation*}
	The assertion follows from these three equalities. 	
\end{proof}

The result in the opposite direction can be found in \cite[Theorem~15]{semenov2010invariant}. Setting $n_k = 2^k$ for $k \in \mathbb N$ in the notations of \eqref{eq:11}, we obtain
$$
\{Bx: \, B \in \mathfrak B(C)\} = \{1/2\}.
$$

This lemma also shows that the assumption \eqref{eq:33} cannot be replaced by a weaker one:
$$
\lim_{k \to \infty} (n_{k + 1} - n_k) = \infty.
$$
Therefore, $V_0 \subsetneq V$.
Also, if $x \in V$, then the Sucheston theorem~\cite{sucheston1967banach} implies
$$
\{Bx: \, B \in \mathfrak B\} = [0, 1].
$$

The following result is a particular form of \cite[Lemma 13]{semenov2014geom}.
\begin{lem}\label{lemBV}
	For every $B \in \mathfrak B$
	\begin{equation*} \label{eq:77}
		0, 1 \in \{Bx: \, x \in V_0\}.
	\end{equation*}	
\end{lem}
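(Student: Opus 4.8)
The plan is first to reduce both assertions to a single construction by a complementation symmetry, and then to obtain the required sequence by choosing the block endpoints inductively in a way adapted to the given $B$. If $x\in V_0$ is the indicator $\chi_E$ of $E=\bigcup_{k}[n_{2k},n_{2k+1})$, then $\mathrm{1\negthickspace I}-x$ is, after discarding the finitely many coordinates below $n_2$ and relabelling $m_j:=n_{j+1}$, again a member of $V_0$: its ones now occupy the former gaps $[n_{2k+1},n_{2k+2})$, and the growth condition \eqref{eq:33} is preserved under this shift of indices. Since $B(\mathrm{1\negthickspace I}-x)=1-Bx$, it therefore suffices to produce a single $x\in V_0$ with $Bx=1$; applying the complement then yields a member of $V_0$ on which $B$ vanishes. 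I would also record that every finite interval is $B$-null, its indicator being finitely supported and hence almost convergent to $0$. Consequently, with $E_m=\bigcup_{k\le m}[n_{2k},n_{2k+1})$ finite, finite additivity gives
$$
Bx=B\Bigg(\bigcup_{k>m}[n_{2k},n_{2k+1})\Bigg)\qquad\text{for every }m\in\mathbb N,
$$
so that $Bx$ is a pure tail quantity, untouched by any finite initial portion of the construction.

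With this reduction in hand, I would construct the endpoints $n_1<n_2<\cdots$ recursively, keeping $n_{k+1}/n_k\to\infty$ so that $x\in V_0$, and choosing the scales so that the union $F=\mathbb N\setminus E$ of the zero-blocks is assigned measure $0$ by $B$, equivalently $Bx=1$. At each stage finitely many blocks are already fixed; since, by the displayed identity, the value of $B$ on the remaining tail is insensitive to this finite part, one is free to push the next pair of endpoints out far enough to drive a chosen approximating quantity below $\varepsilon_k$, with $\varepsilon_k\downarrow 0$, and then to pass to the limiting sequence $x$ by a diagonal argument, using weak$^*$-compactness of $\mathfrak B$ to upgrade the approximation to an exact equality. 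This is precisely the mechanism behind \cite[Lemma 13]{semenov2014geom}, of which the present statement is the announced special case, so the cleanest route is to specialise that lemma to the class $V_0$.

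The main obstacle is exactly that $B$ is only finitely additive: although each zero-block is $B$-null, their infinite union $F$ may carry positive mass, and this mass cannot be controlled by any fixed averaging operator $A\in\operatorname{conv}\{T^k\}$. Indeed, condition \eqref{eq:33} forces every block to be long in comparison with its left endpoint, so both $E$ and $F$ have Ces\`aro upper density $1$ and lower density $0$; hence $\liminf_i(A\chi_E)_i=0$ while $\limsup_i(A\chi_E)_i=1$, and since $Bx=B(A\chi_E)$ the estimates $\liminf_i(A\chi_E)_i\le Bx\le\limsup_i(A\chi_E)_i$ are both trivial. This is why the endpoints must be selected adaptively to the particular functional $B$ rather than by any uniform estimate, and why a compactness or diagonal argument (equivalently, the cited lemma) is required to pin the tail value to the exact endpoint instead of merely approximating it.
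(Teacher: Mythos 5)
Your ultimate fallback---specialising \cite[Lemma 13]{semenov2014geom} to the class $V_0$---is in fact all the paper does: its ``proof'' of Lemma~\ref{lemBV} consists of the single sentence preceding the statement, which cites that lemma. The complementation reduction you begin with is correct and is a genuine (if modest) addition: if $x\in V_0$ is built from $(n_k)$, then $\mathrm{1\negthickspace I}-x$ agrees, off finitely many coordinates (whose indicator is $B$-null), with the element of $V_0$ built from the shifted sequence $(n_{k+1})$, condition \eqref{eq:33} survives the reindexing, and $B(\mathrm{1\negthickspace I}-x)=1-Bx$, so realising either of the two values suffices.

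The part of your text that purports to describe ``the mechanism behind'' the cited lemma, however, has a concrete gap exactly at the point you yourself flag as the difficulty: passing from approximate to exact values. The ``approximating quantity'' to be driven below $\varepsilon_k$ is never identified, and, as you note, the Ces\`aro/Sucheston bounds are vacuous for sets satisfying \eqref{eq:33}, so they cannot supply one. (An approximate version is salvageable: fixing a rapidly growing scale $(m_j)$ and setting $\mu(S)=B\chi_{\cup_{j\in S}[m_j,m_{j+1})}$ defines a finitely additive measure vanishing on finite sets, so for any partition of $\mathbb N$ into infinitely many infinite pieces $A_j$ one has $\sum_j\mu(A_j)\le 1$, hence $\mu(A_j)\to 0$ and $B\chi_{E_{\mathbb N\setminus A_j}}>1-\varepsilon$ with $E_{\mathbb N\setminus A_j}$ still of $V_0$ type.) More seriously, the proposed upgrade to exact equality does not work as described: weak$^*$-compactness of $\mathfrak B$ is irrelevant since $B$ is fixed and no sequence of functionals is being extracted, while a diagonal limit of sequences $x^{(k)}$ with $Bx^{(k)}\to 1$ need not satisfy $Bx=1$, because $B$ is only norm-continuous and the diagonal limit is not a norm limit. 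Pinning the exact endpoint requires an additional device (a nested/monotone arrangement of the approximants, or the argument actually carried out in \cite{semenov2014geom}), which your sketch does not supply; as written, your proof reduces to the same citation the paper uses, preceded by a heuristic that does not close.
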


\begin{cor}
	The set $\{Bx: \, x \in V_0\}$ is finite or countable for every $B \in \mathfrak B_d$ and $x \in V_0$.	
\end{cor}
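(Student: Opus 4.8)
The plan is to reduce the statement to counting the sign patterns produced by the extreme points appearing in the discrete decomposition of $B$. First I would invoke the decomposition used in the proof of Theorem \ref{thmdiscr}: since $B\in\mathfrak B_d$, we may write $B=\sum_{i=1}^\infty\alpha_iB_i$ with $B_i\in\operatorname{ext}\mathfrak B$, $\alpha_i\ge0$ and $\sum_{i=1}^\infty\alpha_i=1$. Every $x\in V_0$ is a $0$-$1$ sequence, so, exactly as in that proof, \cite[Corollary 29]{semenov2014geom} gives $B_ix\in\{0,1\}$ for each $i$. Writing $S(x)=\{i:B_ix=1\}$, linearity together with $\ell_1$-convergence yields
$$Bx=\sum_{i=1}^\infty\alpha_iB_ix=\sum_{i\in S(x)}\alpha_i,$$
so each value $Bx$ is a subset sum of the fixed summable sequence $(\alpha_i)$.

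Next I would note that the value set is controlled by the family of realized index sets:
$$\{Bx:x\in V_0\}=\Big\{\sum_{i\in S}\alpha_i:S\in\mathcal S\Big\},\qquad \mathcal S:=\{S(x):x\in V_0\}.$$
Hence it suffices to prove that $\mathcal S$ is at most countable, for then the displayed set is a countable collection of sums and the corollary follows. The finite case makes the mechanism transparent: if only $\alpha_1,\dots,\alpha_m$ are nonzero, then $S(x)\subseteq\{1,\dots,m\}$, so $\mathcal S$ has at most $2^m$ members and $\{Bx:x\in V_0\}$ is finite (with $0$ and $1$ among its values by Lemma \ref{lemBV}). This reduction is genuinely necessary: the set of \emph{all} subset sums of $(\alpha_i)$ need not be countable (for $\alpha_i=2^{-i}$ it equals $[0,1]$), so the content of the corollary is precisely that $V_0$ realizes only countably many of them.

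The main obstacle is therefore to show that the evaluation map $x\mapsto(B_ix)_{i\ge1}\in\{0,1\}^{\mathbb N}$ has at most countable range on $V_0$. My approach would be to analyze how a single extreme Banach limit reads a lacunary $0$-$1$ sequence: using that $B_i$ is $0$-$1$ valued on sets, its shift-invariance, and the super-exponential growth \eqref{eq:33} defining $V_0$, I would try to show that $B_ix$ is determined by finitely much combinatorial data attached to the block pattern of $x$, so that jointly only countably many patterns $(B_ix)_i$ can occur; the mutual orthogonality $\big\|\sum_k\lambda_kB_k\big\|_{\ell^*_\infty}=\sum_k|\lambda_k|$ of distinct extreme points would be used to keep the contributions of the different $B_i$ independent. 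Alternatively, should the direct count resist, one could attempt a dichotomy argument: if $\{Bx:x\in V_0\}$ were uncountable one would hope to extract a perfect set of values and, through Theorem \ref{thmcont}, a nonzero continuous component of $B$, contradicting $B\in\mathfrak B_d$; making this rigorous would require a regularity property of the (nowhere continuous) map $x\mapsto Bx$ on $V_0$, which is itself delicate. I expect this step — establishing the countability of the realized patterns — to be the crux of the argument.
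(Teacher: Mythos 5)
Your reduction is the natural one and is, as far as one can tell, the only argument available (the paper states this corollary without an explicit proof): writing $B=\sum_i\alpha_iB_i$ with $B_i\in\operatorname{ext}\mathfrak B$ as in the proof of Theorem \ref{thmdiscr}, and using \cite[Corollary 29]{semenov2014geom} to get $B_ix\in\{0,1\}$ for the $0$--$1$ sequences $x\in V_0$, one obtains $Bx=\sum_{i\in S(x)}\alpha_i$ with $S(x)=\{i: B_ix=1\}$. Your observation that this alone does not finish the argument is correct and is the most valuable part of the proposal: when infinitely many $\alpha_i$ are nonzero, the set of \emph{all} subset sums of $(\alpha_i)$ is a nonempty compact set without isolated points, hence uncountable (your example $\alpha_i=2^{-i}$ gives a full interval). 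So everything hinges on showing that only countably many values $\sum_{i\in S(x)}\alpha_i$ are realized as $x$ ranges over $V_0$.

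That step is precisely what your proposal does not supply, and neither of the two routes you sketch is workable as described. For the first: the value $B_ix$ is not determined by ``finitely much combinatorial data'' attached to the block pattern of $x$ --- extreme points of $\mathfrak B$ arise from ultrafilter-type constructions, and Lemma \ref{lemBV} (via \cite[Lemma 13]{semenov2014geom}) already shows that on one and the same $x\in V_0$ different extreme points attain both values $0$ and $1$; nothing you cite rules out a fixed sequence $(B_i)$ separating continuum many elements of $V_0$ into continuum many distinct patterns $(B_ix)_i$. The orthogonality relation $\bigl\|\sum_k\lambda_kB_k\bigr\|_{\ell^*_\infty}=\sum_k|\lambda_k|$ controls norms of combinations of the $B_i$, not the joint behaviour of the evaluation map $x\mapsto(B_ix)_i$ on $V_0$, so it does not help here. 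For the second: a perfect-set dichotomy would require some descriptive-set-theoretic regularity (continuity or measurability) of $x\mapsto Bx$ on $V_0$, which you correctly note is unavailable. So the crux of the corollary remains unproven in your write-up; closing it would require a genuinely new idea about how a fixed countable family of extreme Banach limits can act on the lacunary sequences of $V_0$.
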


	It follows from~\cite{semenov2020geom}, that both diameter and radius of the set  $\mathfrak B$ in $\ell^*_\infty$ equal $2$, i.e., 
$$
d\big(\mathfrak B,\ell^*_\infty\big) :=\sup_{x, y \in \mathfrak B} \|x - y\|_{\ell^*_\infty} = 2; \quad r\big(\mathfrak B,\ell^*_\infty\big) :=
\inf_{x \in \mathfrak B} \sup_{y \in \mathfrak B} \|x - y\|_{\ell^*_\infty} = 2.
$$
Furthermore, $d\big(\mathfrak B(C),\ell^*_\infty\big) =d\big(\mathfrak B(\sigma_n),\ell^*_\infty\big) = 2$ ~\cite{alehno2016por}. 

For every $B\in \mathfrak B(C)$ one has $\gamma(B,t)=t$ for every $t\in[0,1]$. Hence, $\mathfrak B(C)\subset \mathfrak B(f_1)$, where $f_1(t)=t$ for every $t\in[0,1]$. Thus, $d\big(\mathfrak B(f_1),\ell^*_\infty\big) = 2$. It was shown in \cite[Theorem 25]{semenov2019main} that the inclusion $\mathfrak B(C)\subset \mathfrak B(f_1)$ is proper, that is there exists $B\in \mathfrak B(f_1)\setminus \mathfrak B(C)$.

The following result shows that the radius of the set $\mathfrak B(C)$ is also equal to 2.

\begin{thm}\label{thmBCrad}
	$r\big(\mathfrak B(C), \ell^*_\infty\big) = 2$.
\end{thm}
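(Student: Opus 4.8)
The plan is to prove $r(\mathfrak{B}(C),\ell_\infty^*)=2$ by establishing the lower bound, since the upper bound $r\le 2$ is automatic: $\mathfrak{B}(C)$ lies on the unit sphere of $\ell_\infty^*$, so any two Banach limits are at distance at most $2$. The radius is thus at most $2$, and the content of the theorem is that it cannot be smaller. To show $r(\mathfrak{B}(C),\ell_\infty^*)\ge 2$, I must verify that for every $B_0\in\mathfrak{B}(C)$ one has $\sup_{B\in\mathfrak{B}(C)}\|B_0-B\|_{\ell_\infty^*}=2$; taking the infimum over $B_0$ then gives the claim.

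The natural strategy is to fix an arbitrary $B_0\in\mathfrak{B}(C)$ and produce, for each $\varepsilon>0$, a Banach limit $B\in\mathfrak{B}(C)$ with $\|B_0-B\|_{\ell_\infty^*}>2-\varepsilon$. Recall that the norm of a difference of two norm-one functionals approaches $2$ when one can find a single test sequence $x$ with $\|x\|_{\ell_\infty}\le 1$ on which $B_0 x$ is close to $1$ while $Bx$ is close to $-1$ (or more precisely, $(B_0-B)x$ close to $2$). The tool I would reach for is Lemma~\ref{lemBCorbit}: for any $x\in V_0$ the full interval $[0,1]$ is attained as $\{Bx:B\in\mathfrak{B}(C)\}$. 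In particular, given any sequence in $V_0$, there exists $B\in\mathfrak{B}(C)$ with $Bx=0$ and another with $Bx=1$. I would use a sequence $x\in V_0$ taking values in $\{-1,+1\}$ (rescaling the $\{0,1\}$-pattern by the affine map $u\mapsto 2u-1$), so that the extreme values of $Bx$ over $\mathfrak{B}(C)$ are $-1$ and $+1$. Then for the fixed $B_0$, the value $B_0 x\in[-1,1]$; choosing $B$ to realize the opposite extreme ($+1$ if $B_0x\le 0$, else $-1$) forces $|(B_0-B)x|\ge 1$, which gives a distance of at least $1$ — but this only yields radius $\ge 1$, not $2$.

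To reach the value $2$ I would refine this: instead of one test sequence I would exploit that $\mathfrak{B}(C)$ is large enough that for fixed $B_0$ I can find $B\in\mathfrak{B}(C)$ with $B_0 y$ near $1$ and $B y$ near $0$ simultaneously on a suitable $\{0,1\}$-valued $y\in V_0$, and a second sequence handling the symmetric situation. Concretely, since $d(\mathfrak{B}(C),\ell_\infty^*)=2$ is already recorded in the text (from \cite{alehno2016por}), there exist $B_1,B_2\in\mathfrak{B}(C)$ with $\|B_1-B_2\|=2$; the real task is to show the \emph{worst} element $B_0$ still has a far companion. The cleanest route is: fix $B_0$, and by Lemma~\ref{lemBV} together with Lemma~\ref{lemBCorbit}, select a sequence $x\in V_0$ with $B_0 x$ as close as desired to $0$, then invoke Lemma~\ref{lemBCorbit} to pick $B\in\mathfrak{B}(C)$ with $Bx=1$; since $x$ is $\{0,1\}$-valued, evaluate $(B-B_0)$ on the auxiliary sequence $2x-\mathrm{1\negthickspace I}$, on which $B_0$ returns approximately $-1$ and $B$ returns $+1$, giving $\|B-B_0\|_{\ell_\infty^*}\ge (B-B_0)(2x-\mathrm{1\negthickspace I})\approx 2$.

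The main obstacle is the very first move: guaranteeing that for the fixed $B_0$ we can drive $B_0 x$ arbitrarily close to $0$ (equivalently $1$) along sequences in $V_0$. Lemma~\ref{lemBV} asserts $0,1\in\overline{\{Bx:x\in V_0\}}$ for \emph{every} $B\in\mathfrak{B}$, which is exactly the statement that $\inf_{x\in V_0}B_0 x=0$ and $\sup_{x\in V_0}B_0x=1$; this is the crucial input that makes the worst-case $B_0$ harmless, and it is where the burden of the argument rests. Once a sequence $x\in V_0$ with $B_0x<\varepsilon/2$ is secured, Lemma~\ref{lemBCorbit} supplies a Cesàro-invariant $B$ with $Bx=1$, and a short computation on $2x-\mathrm{1\negthickspace I}$ yields $\|B_0-B\|_{\ell_\infty^*}>2-\varepsilon$, completing the lower bound and hence the theorem. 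I would finish by remarking that the same test sequence shows the estimate is sharp and that the infimum over $B_0$ does not decrease the value below $2$, since the argument applies verbatim to every $B_0\in\mathfrak{B}(C)$.
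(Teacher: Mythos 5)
Your proposal is correct and follows essentially the same route as the paper: fix $B_0\in\mathfrak B(C)$, use Lemma~\ref{lemBV} to find $x\in V_0$ with $B_0x=0$ (the paper gets exact attainment, so no $\varepsilon$ is needed), use Lemma~\ref{lemBCorbit} to find $B\in\mathfrak B(C)$ with $Bx=1$, and evaluate on $2x-\mathrm{1\negthickspace I}$ to get distance $2$. The only cosmetic difference is your $\varepsilon$-approximation, which the exact form of Lemma~\ref{lemBV} renders unnecessary.
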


\begin{proof}
Let $B_1 \in \mathfrak B(C)$. According to Lemma \ref{lemBV}, you can find a sequence $x \in V_0$ such that $B_1 x = 0$. Due to Lemma \ref{lemBCorbit}, there exists $B_2 \in \mathfrak B(C)$ such that $B_2 x = 1$. Then, $(B_2 - B_1) x = 1 - 0 = 1$.

The sequence $y = 2x - \mathrm{1\negthickspace I}$ obviously has the property that all of its coordinates are $\pm 1$. Therefore, $\|y\|_{\ell_\infty} = 1$, and
	$$
	(B_2 - B_1) y = (B_2 - B_1) (2x - \mathrm{1\negthickspace I}) = 2 - 0 = 2.
	$$
	Hence,
	$$
	r\big(\mathfrak B(C), \ell^*_\infty\big) \geqslant \|B_2 - B_1\|_{\ell^*_\infty} \geqslant 2.
	$$
	The converse inequality is obvious.
\end{proof}

Now we shall apply the properties of the set $\mathfrak B(C)$, obtained in Lemma \ref{lemBCorbit}, to study the set $\mathfrak B_c$, the continuous part of the set $\mathfrak B$.

	\begin{lem}\label{lemBcorbit}
	For every $x \in V_0$ we have
	$$
	\{Bx: \, B \in \mathfrak B_c\} = [0, 1].
	$$	
\end{lem}

\begin{proof}
It was proved in~\cite{semenov2013extreme} that sets $\operatorname{ext} \mathfrak B$ and $\mathfrak B(C)$ are orthogonal. Therefore, $\mathfrak B(C)$ is contained in the orthogonal complement of $\mathfrak B_d$, i.e., $\mathfrak B(C) \subset \mathfrak B_c$.
Applying Lemma \ref{lemBCorbit}, we obtain:
	$$
	[0, 1] = \{Bx: \, B \in \mathfrak B(C)\} \subset \{Bx: \, B \in \mathfrak B_c\}.
	$$
	The converse inclusion is evident.	
\end{proof}

Further, we aim to compute the radius of the set $\mathfrak B_c$. Before we proceed to the result we explain why Theorem \ref{thmBCrad} cannot be applied.

It is obvious from the definition that $d(M, \ell^*_\infty) \leqslant d(N, \ell^*_\infty)$ for every subsets $M, N \in \ell_\infty^*$ such that $M\subseteq N$. It turns out that a similar statement does not hold for radii of sets, in general. We shall need the definition of relative radius of a set $M$ in a normed space $E$, which is defined as follows:
$$r_E(M)=\inf_{x\in E}\sup_{y\in M} \|x-y\|_E.$$

\begin{prop}
	Let $E$ be a normed space. The inequality $r(M,E)\le r(N,E)$ holds for every convex bounded set $M\subset E$ and every set $N\subset E$ such that $M\subset N$ if and only if $E$ is a 2-dimensional or Hilbert space.
\end{prop}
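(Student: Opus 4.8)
The plan is to first pin down the two notions of radius at play. Reading $r(M,E)=\inf_{x\in M}\sup_{y\in M}\|x-y\|_E$ as the \emph{self-radius} (consistent with the definition of $r(\mathfrak B,\ell^*_\infty)$ given above), while $r_E(M)=\inf_{x\in E}\sup_{y\in M}\|x-y\|_E$ is the relative radius. The relative radius is trivially monotone, since $M\subset N$ gives $\sup_{y\in M}\|x-y\|\le\sup_{y\in N}\|x-y\|$ pointwise in $x$; and one always has $r_E(N)\le r(N,E)$ because the infimum over $N$ is an infimum over a smaller set than over $E$. My first step is to show that the monotonicity asserted in the Proposition is equivalent to the single identity $r_E(M)=r(M,E)$ for every convex bounded $M$. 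For the easy direction: if this identity holds, then $r(M,E)=r_E(M)\le r_E(N)\le r(N,E)$. For the converse, if some convex bounded $M$ had $r_E(M)<r(M,E)$, I would pick $c\in E$ with $\sup_{y\in M}\|c-y\|<r(M,E)$; such a $c$ must lie outside $M$, and then $N:=M\cup\{c\}$ satisfies $r(N,E)\le\sup_{y\in M}\|c-y\|<r(M,E)$, breaking monotonicity.

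Next I would reformulate the identity geometrically. Since the farthest-distance function $f_M(x)=\sup_{y\in M}\|x-y\|$ satisfies $f_B=f_{\operatorname{conv}B}$ for any bounded $B$, the equality $r_E(M)=r(M,E)$ for all convex bounded $M$ is precisely the statement that the Chebyshev center of every bounded set lies in its closed convex hull. Thus the Proposition reduces to the classical fact that this ``center-in-hull'' property characterizes the spaces that are Hilbert or two-dimensional. Sufficiency for a Hilbert space is the standard projection argument: if a Chebyshev center $c$ lay outside $K:=\overline{\operatorname{conv}}M$, its metric projection $c'$ onto $K$ would satisfy $\|c'-y\|^2\le\|c-y\|^2-\|c-c'\|^2<\|c-y\|^2$ for all $y\in K\supset M$, so $f_M(c')<f_M(c)$, a contradiction.

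For $\dim E=2$ I would argue directly. Suppose a center $c$ escapes $K=\overline{\operatorname{conv}}M$ and separate it: choose $\psi\in E^*$ with $\psi(c)>\sup_K\psi$, so the contact set $F=\{y\in M:\|c-y\|=f_M(c)\}$ satisfies $\psi(c-y)\ge\delta>0$. Minimality gives $0\in\partial f_M(c)=\overline{\operatorname{conv}}\bigcup_{y\in F}J(c-y)$, where $J(z)$ is the set of norming functionals of $z$. The directions of $c-y$, $y\in F$, form a compact family strictly inside the open half-plane $\{\psi>0\}$; invoking the antipodal relation $J(-z)=-J(z)$ (valid in any normed space), their norming functionals are confined, in the one-dimensional dual circle, to a closed subarc of an open semicircle, and hence admit a common $u\in E$ with $\phi(u)\ge\delta'>0$ for all of them. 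Then every element of $\partial f_M(c)$ pairs positively with $u$, contradicting $0\in\partial f_M(c)$. The genuinely two-dimensional input is exactly this ``open semicircle forces a common positive direction,'' which has no analogue on a higher-dimensional sphere.

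For necessity I must exhibit, in any $E$ that is neither Hilbert nor two-dimensional, a convex bounded $M$ with $r_E(M)<r(M,E)$. The model is $\ell^3_\infty$: for $s_1=(0,0,0)$, $s_2=(1,1,0)$, $s_3=(1,0,1)$ the smallest enclosing cube is centered at $(\tfrac12,\tfrac12,\tfrac12)\notin\operatorname{conv}\{s_i\}$ with radius $\tfrac12$, whereas on the triangle $f_M$ strictly exceeds $\tfrac12$, so $M=\operatorname{conv}\{s_1,s_2,s_3\}$ works. For a general $E$, since $E$ is non-Hilbert some $2$-plane is non-Euclidean (Jordan--von Neumann), and as $\dim E\ge3$ I can enlarge it to a three-dimensional non-Euclidean subspace $Q$. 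A clean observation then removes any worry about where the ambient center sits: for $M\subset Q$ the self-radius $r(M,E)=\inf_{x\in M}\sup_{y\in M}\|x-y\|$ is intrinsic to $M$ and so agrees with its value in $Q$, while $r_E(M)\le r_Q(M)$ (enlarging the ambient space only lowers the relative radius); hence an escape inside $Q$ forces $r_E(M)<r(M,E)$ in $E$. The main obstacle, and the deep input, is therefore the remaining three-dimensional statement: the classical fact (Klee; see Amir's monograph on characterizations of inner product spaces) that \emph{every} three-dimensional non-Euclidean norm admits a finite set whose Chebyshev center leaves its convex hull. The secondary delicate point is the careful handling of the norming multifunction $J$ at corners of the unit ball in the two-dimensional argument above.
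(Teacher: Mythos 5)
Your reduction of the Proposition to the single identity $r_E(M)=r(M,E)$ for all convex bounded $M$ is correct, and your explicit construction $N=M\cup\{c\}$ with $c$ a near-optimal Chebyshev center is exactly what the direction ``monotonicity $\Rightarrow$ $E$ is 2-dimensional or Hilbert'' requires; the paper takes the same reduction but then simply invokes Klee's theorem \cite{klee1960asymp} for the equivalence of that identity with $E$ being 2-dimensional or Hilbert, rather than reproving it. Your Hilbert-space sufficiency via the metric projection is also fine. The genuine gap is in your two-dimensional argument. You set out to show that no Chebyshev center of $M$ can lie outside $K=\overline{\operatorname{conv}}\,M$, but that statement is false in a general two-dimensional space: in $\ell_\infty^2$ take $M=\{(0,t):-1\le t\le 1\}$; then $f_M(x)=\max(|x_1|,|x_2|+1)$, the Chebyshev radius is $1$, and the set of centers is the whole segment $\{(s,0):|s|\le 1\}$, which contains points such as $c=(1,0)\notin M$. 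Klee's theorem only asserts that \emph{some} (near-)center lies in $K$, so any proof by contradiction that starts from ``a center escapes $K$'' proves too much and cannot close. Concretely, the step that fails is the claim that the norming functionals of the contact directions, all lying in the open half-plane $\{\psi>0\}$, are confined to a closed subarc of an open semicircle of the dual circle: in the example above, with $\psi=e_1^*$, the contact directions include $(1,1)$ and $(1,-1)$, whose norming sets in $\ell_1^2$ contain the antipodal pair $(0,1)$ and $(0,-1)$; hence $0$ does belong to $\overline{\operatorname{conv}}\bigcup_{y\in F}J(c-y)=\partial f_M(c)$ and no contradiction arises. The behaviour of $J$ at corners and flat edges of the ball is not the ``secondary delicate point'' you call it --- it is where the argument, and indeed the statement it aims at, breaks down.

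A smaller remark on the necessity direction: you ultimately fall back on the classical fact that every non-Euclidean norm in dimension at least $3$ contains a bounded convex set whose Chebyshev radius strictly drops when the center is allowed to leave the set. That is essentially Klee's theorem itself, so this half of your argument is a citation rather than a proof --- which is acceptable (the paper does the same for both halves), but it means the only portion of Klee's characterization you actually attempt to prove from scratch is the two-dimensional sufficiency, and that is precisely the portion with the error. Note also that the phrasing ``a finite set whose Chebyshev center leaves its convex hull'' is weaker than what you need, namely $r_E(M)<r(M,E)$: the $\ell_\infty^2$ example shows that a center can leave the hull without the radius dropping. Your $\ell_\infty^3$ computation and the passage from a three-dimensional subspace $Q$ to the ambient space via $r_E(M)\le r_Q(M)$ and the intrinsic nature of $r(M,E)$ are correct.
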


\begin{proof}
	For every $N\subset E$ such that $M\subseteq N$ one has 
	\begin{equation*}
		\inf_{x\in N}\sup_{y\in N} \|x-y\|_E \ge \inf_{x\in N}\sup_{y\in M} \|x-y\|_E\ge \inf_{x\in E}\sup_{y\in M} \|x-y\|_E.
	\end{equation*}
	Thus,
	\begin{equation}\label{radii}
	r(N,E)\ge r_E(M).
	\end{equation}	
	
	It was proved in \cite{klee1960asymp} that $E$ is a 2-dimensional or a Hilbert space if and only if $r(M,E)=r_E(M)$ for every convex bounded set $M\in E$. 
	
	Let $E$ be a 2-dimensional or a Hilbert space. The assertion follows from \eqref{radii} and the above mentioned result.
	
	Conversely, suppose to the contrary that there exists a convex bounded set $M\in E$ and a set $N\in E$ such that $M\subseteq N$ and $r(M, E)>r(N,E)$. Using \eqref{radii}, we obtain that $r(M,E)>r_E(M).$ The assertion follows from \cite{klee1960asymp}. 
\end{proof}

\begin{thm}
	$r(\mathfrak B_c, \ell^*_\infty) = 2$.	
\end{thm}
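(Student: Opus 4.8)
The plan is to mimic the proof of Theorem \ref{thmBCrad}, with $\mathfrak B_c$ playing the role of $\mathfrak B(C)$ throughout. The upper bound $r(\mathfrak B_c, \ell^*_\infty) \le 2$ is immediate: every Banach limit has norm $1$, so for any $B, B' \in \mathfrak B_c$ one has $\|B - B'\|_{\ell^*_\infty} \le \|B\|_{\ell^*_\infty} + \|B'\|_{\ell^*_\infty} = 2$; hence the inner supremum, and therefore the infimum defining the radius, cannot exceed $2$.

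For the lower bound I would fix an arbitrary $B_1 \in \mathfrak B_c$ and show that $\sup_{B \in \mathfrak B_c}\|B_1 - B\|_{\ell^*_\infty} = 2$. Since $\mathfrak B_c \subseteq \mathfrak B$, Lemma \ref{lemBV} furnishes a sequence $x \in V_0$ with $B_1 x = 0$. For this same $x$, Lemma \ref{lemBcorbit} guarantees a Banach limit $B_2 \in \mathfrak B_c$ with $B_2 x = 1$. I would then set $y = 2x - \mathrm{1\negthickspace I}$; because every coordinate of $x$ is $0$ or $1$, all coordinates of $y$ equal $\pm 1$, so $\|y\|_{\ell_\infty} = 1$. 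Linearity together with condition (ii) in the definition of a Banach limit gives $(B_2 - B_1)y = 2(B_2 x - B_1 x) - (B_2 \mathrm{1\negthickspace I} - B_1 \mathrm{1\negthickspace I}) = 2(1 - 0) - 0 = 2$, whence $\|B_2 - B_1\|_{\ell^*_\infty} \ge 2$. As $B_1 \in \mathfrak B_c$ was arbitrary, the infimum over $B_1$ of the inner supremum is at least $2$, and combining with the upper bound yields $r(\mathfrak B_c, \ell^*_\infty) = 2$.

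The conceptual obstacle, already signalled in the paragraphs preceding the statement, is that one cannot simply deduce the claim from $r(\mathfrak B(C), \ell^*_\infty) = 2$ (Theorem \ref{thmBCrad}) and the inclusion $\mathfrak B(C) \subseteq \mathfrak B_c$ (established inside the proof of Lemma \ref{lemBcorbit}): the preceding Proposition shows that monotonicity of the radius under set inclusion fails outside $2$-dimensional and Hilbert spaces, and $\ell^*_\infty$ is neither. A direct construction is therefore genuinely required. The only substantive point beyond the bookkeeping is the correct pairing of the two lemmas: Lemma \ref{lemBV} is to be read as a statement valid for every element of $\mathfrak B$, so that it applies to the given $B_1 \in \mathfrak B_c$ and extracts the value $0$, while Lemma \ref{lemBcorbit} supplies a partner $B_2$ lying \emph{inside} $\mathfrak B_c$ that attains the value $1$ on the very same $x$. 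Once these values are aligned on a single $x \in V_0$, the affine substitution $y = 2x - \mathrm{1\negthickspace I}$ converts the gap of $1$ in functional values into a gap of $2$ in norm, exactly as in Theorem \ref{thmBCrad}.
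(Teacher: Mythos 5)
Your proof is correct and follows essentially the same route as the paper's: the paper likewise fixes $B_1 \in \mathfrak B_c$, applies Lemma \ref{lemBV} to find $x \in V_0$ with $B_1 x = 0$, applies Lemma \ref{lemBcorbit} to find $B_2 \in \mathfrak B_c$ with $B_2 x = 1$, and passes to $y = 2x - \mathrm{1\negthickspace I}$ to obtain $(B_2-B_1)y = 2$. Your additional remarks on the upper bound and on why monotonicity of the radius cannot be invoked are accurate and consistent with the paper's surrounding discussion.
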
 

\begin{proof}
The proof follows the same idea as that of Theorem \ref{thmBCrad}. Let $B_1 \in \mathfrak B_c$. By Lemma \ref{lemBV}, there exists a sequence $x \in V_0$ such that $B_1 x = 0$. Using Lemma \ref{lemBcorbit}, we conclude that there exists $B_2 \in \mathfrak B_c$ such that $B_2 x = 1$. Then $(B_2 - B_1) x = 1$. For $y = 2x - \mathrm{1\negthickspace I}$, we have $\|y\|_{\ell_\infty} = 1$, and
	$$
	(B_2 - B_1) y = (B_2 - B_1) (2x - \mathrm{1\negthickspace I}) = 2.
	$$
	Hence, $r(\mathfrak B_c, \ell^*_\infty) = 2$. 	
\end{proof}

\section{Some cardinalities}
 C. Chou proved that the cardinality of the set ${\rm ext}\! \ \mathfrak B$ is $2^{\mathfrak{c}}$ \cite{chou1969size}. This implies that ${\rm card}\! \ (\mathfrak B_d )=2^{\mathfrak{c}}$. 
In \cite[Corollary 13]{semenov2019main} it was proved that for every $s\in[0,1]$ the set of all $B\in {\rm ext}\! \ \mathfrak B$ such that 
$\gamma(B,\cdot)=\chi_{[s,1]}$
has cardinality $2^{\mathfrak{c}}$. This splits the set ${\rm ext}\! \ \mathfrak B$ (and, so, the set $\mathfrak B_c$) into continually many disjoint sets of maximal cardinality. 

Since the cardinality of $\mathfrak B(C)$ is maximal \cite[Section 4]{alehno2018invariant} and $\mathfrak B(C) \subset \mathfrak B_c$, it follows that  $\mathfrak B_c$ is of maximal cardinality as well. However, the set $\mathfrak B(C)$ rather appears to be a small subset of of $\mathfrak B_c$, since for every $B\in \mathfrak B(C)$ we have $\gamma(B,t)=t, 0\le t\le1$, whereas by Theorem \ref{thmcont} the set $\{\gamma(B,\cdot): B\in \mathfrak B_c\}$ is much more diverse. In this section we address the question of how big is the set $\mathfrak B_c \setminus \mathfrak B(C)$.  We show that for every absolutely continuous function $f\in \Gamma$ with H\"older continuous derivative, the set of all $B\in \mathfrak B$ with $\gamma(B,\cdot)=f$ has cardinality $2^{\mathfrak{c}}$. Hence, the set $\mathfrak B_c \setminus \mathfrak B(C)$ splits into continually many disjoint sets of maximal cardinality.

We will use the following (particular form of the) result proved in \cite[Lemma 2]{alehno2016por}. 

\begin{lem}\label{lem1}
	Let $D$ and $M$ be convex subsets of $\ell_\infty^*$ (equipped with weak$^*$ topology)
	and let $M$ be compact. Suppose that there exist mappings 
	$f_1, f_2: \ell_\infty^* \to \ell_\infty^*$ satisfying the following conditions:
	
	\begin{enumerate}
		\item[(i)] $f_2$ is affine and continuous;
		\item[(ii)] The following inclusion holds:
		$${\rm ext}\! \ D \subseteq f_1^{-1}(M) \cap (f_2\circ f_1 - I)^{-1}(0)\text{;}$$
		\item[(iii)] The following inclusion holds: $f_2(M) \subseteq D$.
	\end{enumerate}
	Then the inequality
	$${\rm card}\! \ ({\rm ext}\! \ M ) \geqslant {\rm card}\! \ ({\rm ext}\! \ D)$$
	holds.
\end{lem}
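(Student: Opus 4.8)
The plan is to build an explicit injection from $\operatorname{ext} D$ into $\operatorname{ext} M$, which instantly yields $\operatorname{card}(\operatorname{ext} M)\geqslant\operatorname{card}(\operatorname{ext} D)$. The guiding observation is that hypotheses (i)--(iii) are tailored precisely so that, for each extreme point $e$ of $D$, the fiber
$$
M_e := f_2^{-1}(e)\cap M
$$
is a nonempty \emph{face} of $M$. Picking one extreme point from each of these pairwise disjoint faces will produce distinct extreme points of $M$ indexed by $\operatorname{ext} D$.

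First I would fix $e\in\operatorname{ext} D$ and unpack (ii). The inclusion $e\in f_1^{-1}(M)$ gives $f_1(e)\in M$, and the identity $(f_2\circ f_1-I)(e)=0$ gives $f_2(f_1(e))=e$; together these say $f_1(e)\in M_e$, so $M_e\neq\varnothing$. Since $f_2$ is continuous by (i), the fiber $f_2^{-1}(e)$ is weak$^*$-closed, so $M_e$ is a weak$^*$-closed subset of the weak$^*$-compact set $M$, hence itself weak$^*$-compact; it is convex as an intersection of convex sets. Note that nothing beyond (ii) is required of $f_1$ --- no continuity or affineness --- it serves only to witness that $M_e$ is nonempty.

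The heart of the argument is to show $M_e$ is a face of $M$. Suppose $m=\lambda m_1+(1-\lambda)m_2\in M_e$ with $m_1,m_2\in M$ and $\lambda\in(0,1)$. Applying the affine map $f_2$ and using $f_2(m)=e$ gives $e=\lambda f_2(m_1)+(1-\lambda)f_2(m_2)$; by (iii) both $f_2(m_1)$ and $f_2(m_2)$ lie in $D$, and since $e$ is extreme in $D$ this forces $f_2(m_1)=f_2(m_2)=e$, i.e.\ $m_1,m_2\in M_e$. Thus $M_e$ is a face. Consequently $\operatorname{ext} M_e\subseteq\operatorname{ext} M$: if $v\in\operatorname{ext} M_e$ and $v=\lambda m_1+(1-\lambda)m_2$ with $m_i\in M$ and $\lambda\in(0,1)$, the face property puts $m_1,m_2\in M_e$, and extremality of $v$ in $M_e$ yields $m_1=m_2=v$.

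To finish, since $M_e$ is nonempty, weak$^*$-compact and convex in the locally convex space $\ell_\infty^*$ under the weak$^*$ topology, the Krein--Milman theorem gives $\operatorname{ext} M_e\neq\varnothing$; I would choose $v_e\in\operatorname{ext} M_e\subseteq\operatorname{ext} M$ for each $e$. For distinct $e,e'\in\operatorname{ext} D$ the fibers $M_e$ and $M_{e'}$ are disjoint because $f_2(v_e)=e\neq e'=f_2(v_{e'})$, so $v_e\neq v_{e'}$, and the map $e\mapsto v_e$ is an injection $\operatorname{ext} D\hookrightarrow\operatorname{ext} M$, which proves the claim. The only genuinely load-bearing steps are the verification of the face property (where affineness of $f_2$ in (i), the target condition (iii), and extremality of $e$ all combine) and checking the Krein--Milman prerequisites, which is exactly where the compactness of $M$ is used; everything else is bookkeeping.
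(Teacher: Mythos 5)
Your proof is correct: showing that each fiber $M_e=f_2^{-1}(e)\cap M$ is a nonempty weak$^*$-compact convex face of $M$ (nonemptiness witnessed by $f_1(e)$ via (ii), the face property via the affineness of $f_2$, condition (iii), and the extremality of $e$), then applying Krein--Milman to each fiber and using disjointness of the fibers, yields the injection $\operatorname{ext} D \hookrightarrow \operatorname{ext} M$ and hence the cardinality inequality. The paper does not prove this lemma itself but imports it from \cite[Lemma 2]{alehno2016por}; your argument is the standard one for statements of this kind, and rightly identifies that $f_1$ needs no continuity or affineness (it only witnesses that the fibers are nonempty) and that one cannot simply take $f_1(e)$ as the desired extreme point of $M$, which is exactly why the face-plus-Krein--Milman step is needed.
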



Denote by $EL$ the set of all extended limits on $\ell_\infty$.
For every $f\in \Gamma$ and $A \subset EL$ define the set
$$A(f):= \left\{ \omega \in A : \gamma(\omega,\cdot)=f \right\}.$$


For every $n\in \mathbb N$, setting 
$$a_{n,m}= \frac1{\log 2} \sum_{i=2^n+1}^{2^{n}+m} \frac1i, \ m=1, \dots, 2^n,$$
we obtain a partition 
$$0=a_{n,0}< a_{n,1} < a_{n,2} < \dots < a_{n,2^n}<a_{n,2^n+1}=1.$$
of the interval $[0,1]$ into $2^n+1$ intervals of length at most
\begin{equation}\label{partitionmesh}
	\max\{a_{n,1}, 1- a_{n,2^n}\}=o(1/n), \ n\to \infty.
\end{equation}

The choice of such a peculiar partition is justified in Lemma \ref{justification} below.

Fix an absolutely continuous $f\in \Gamma$ and define the operator $Q$ on~$\ell_\infty$ by setting $(Qx)_{1} =x_1$, $(Qx)_{2} =x_2$ and
$$(Qx)_{n+2} = \sum_{k=2^n+1}^{2^{n+1}} w_{n,k} x_k, \ n=1, 2, \dots,$$
where positive weights are defined as follows:
$$w_{n,k}=f'(a_{n,k-(2^n+1)}) \cdot \frac1{k\log 2}.$$

For every $0<t<1$ define
$$(y_t)_k = \begin{cases}
	1, 2^n \leqslant k < 2^{n+t}\\
	0, 2^{n+t} \leqslant k < 2^{n+1}
\end{cases}, \ k,n \in \mathbb N.$$	
Note, that $\gamma(B, t)=B(y_t).$

\begin{lem}\label{justification}
	For an absolutely continuous function $f\in \Gamma$ and every $0\leqslant t \leqslant 1$ we have $(Qy_t)_n \to f(t)$, $n\to \infty$.
\end{lem}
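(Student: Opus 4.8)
The plan is to recognize $(Qy_t)_{n+2}$ as a left Riemann sum for $\int_0^t f'(s)\,ds = f(t)$ over the partition $\{a_{n,m}\}$, and to show that this sum converges to the integral as the partition mesh tends to zero. The starting point is the exact algebraic identity linking the weights $w_{n,k}$ to the partition. Since
$$a_{n,m+1}-a_{n,m}=\frac{1}{\log 2}\cdot\frac{1}{2^n+m+1},$$
substituting $m=k-(2^n+1)$ (so that $2^n+m+1=k$) gives precisely $w_{n,k}=f'(a_{n,m})\,(a_{n,m+1}-a_{n,m})$. Thus each weight is the value of $f'$ at the left endpoint $a_{n,m}$ of a subinterval, multiplied by the length of that subinterval.

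Next I would evaluate the action on $y_t$. Inside the block $[2^n,2^{n+1})$ one has $(y_t)_k=1$ exactly for $2^n\le k<2^{n+t}$, so the sum defining $(Qy_t)_{n+2}$ reduces, up to a single boundary term of order $O(2^{-n})$, to
$$\sum_{m=0}^{M_n} f'(a_{n,m})\,(a_{n,m+1}-a_{n,m}),\qquad 2^n+M_n\approx 2^{n+t}.$$
Using the harmonic-sum asymptotics $\sum_{i=2^n+1}^{\lfloor 2^{n+t}\rfloor}\tfrac1i=\log\frac{2^{n+t}}{2^n}+O(2^{-n})=t\log 2+o(1)$, I would verify that the right endpoint satisfies $a_{n,M_n+1}\to t$. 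Hence the displayed sum is a left Riemann sum of $f'$ over $[0,a_{n,M_n+1}]$ whose mesh is $o(1/n)$ by \eqref{partitionmesh}.

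Finally I would compare this Riemann sum with the telescoping sum $\sum_{m=0}^{M_n}\big(f(a_{n,m+1})-f(a_{n,m})\big)=f(a_{n,M_n+1})$, which converges to $f(t)$ by continuity of $f$ and $a_{n,M_n+1}\to t$. Absolute continuity of $f$ yields $f(a_{n,m+1})-f(a_{n,m})=\int_{a_{n,m}}^{a_{n,m+1}}f'(s)\,ds$, so the difference between the two sums is exactly $\sum_{m}\int_{a_{n,m}}^{a_{n,m+1}}\big(f'(a_{n,m})-f'(s)\big)\,ds$, bounded in absolute value by $\sum_{m}\int_{a_{n,m}}^{a_{n,m+1}}|f'(a_{n,m})-f'(s)|\,ds$. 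Showing that this error vanishes is the main obstacle, as it amounts precisely to the convergence of the Riemann sums of $f'$ to its integral; it follows at once once $f'$ is uniformly continuous, and in particular in the setting relevant to the main theorem, where $f'$ is H\"older continuous of exponent $\alpha$, the error is at most $L\cdot(\mathrm{mesh})^\alpha\cdot\sum_m(a_{n,m+1}-a_{n,m})\le L\,(\mathrm{mesh})^\alpha\to 0$. Combining the three steps gives $(Qy_t)_{n+2}=f(a_{n,M_n+1})+o(1)\to f(t)$, with the endpoint cases $t=0,1$ being trivial.
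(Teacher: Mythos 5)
Your proof follows essentially the same route as the paper's: both recognize $(Qy_t)_{n+2}$ as a left Riemann (integral) sum for $\int_0^t f'(s)\,ds$ over the partition $\{a_{n,m}\}$ augmented by the endpoint $t$, note that the mesh is $o(1/n)$, and conclude convergence to $f(t)$. The paper simply asserts the convergence ``as an integral sum,'' whereas you carry out the comparison with the telescoping sum $\sum_m\bigl(f(a_{n,m+1})-f(a_{n,m})\bigr)$ explicitly --- and in doing so you correctly put your finger on the one genuine issue: for a merely absolutely continuous $f$, the derivative $f'$ is only an $L^1$ function (and need not even be defined at the sample points $a_{n,m}$), so left Riemann sums need not converge to the integral. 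Your argument closes that step only under uniform, in particular H\"older, continuity of $f'$, which is precisely the extra hypothesis the paper itself imposes where the lemma is actually used (Lemma~\ref{lem2} and the corollary on $\mathfrak B(f)$); for general absolutely continuous $f$ the gap remains, but it is a gap the paper's own proof shares rather than one you introduced.
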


\begin{proof}
	For $t=0$ we have $y_t\equiv 0$ and, so, $(Qy_t)_n \to 0=f(0)$. Fix $0<t\leqslant 1$. For every $n\in \mathbb N$, setting 
	$$\bar a_{n,m}= a_{n,m}, \ m=0, \dots, \lfloor 2^{n+t} \rfloor - 2^n, \ \text{and}, \ \bar a_{n,\lfloor 2^{n+t} \rfloor - 2^n+1}=t$$
	we obtain a partition
	of the interval $[0,t]$ into $\lfloor 2^{n+t} \rfloor - 2^n+1$ intervals of length at most
	$$\max\{\bar a_{n,1}, t- \bar a_{n,\lfloor 2^{n+t} \rfloor - 2^n}\}=o(1/n), \ n\to \infty.$$
	
	Thus,
	\begin{align*}
		(Qy_t)_{n+2} &= \sum_{k=2^n+1}^{2^{n+1}} w_{n,k} (y_t)_k \\
		&= \sum_{k=2^n+1}^{\lfloor 2^{n+t} \rfloor} f'(a_{n,k-(2^n+1)}) \cdot \frac1{k\log 2} \to \int_0^t f'(s) \, ds = f(t),
	\end{align*}
	as an integral sum (since $f$ is absolutely continuous and $f(0)=0$). 
\end{proof}

Similarly,
\begin{equation*}
	(Q\text{1\hspace{-0.55ex}I})_{n+2}= \frac1{\log 2} \sum_{k=2^n+1}^{2^{n+1}}f'(a_{n,k-(2^n+1)}) \cdot \frac1k\to \int_0^1 f'(s) \, ds = f(1)=1,
\end{equation*}
as an integral sum.


Consider operator $R$ on~$\ell_\infty$ given by
$$
(Rx)_1 = x_1, (Rx)_2 = x_2 \ \text{and} \ (Rx)_k=x_{n+2}, \ 2^n+1 \leqslant k \leqslant 2^{n+1}, \ n=1, 2, \dots.$$

\begin{prop}
	Let $f\in \Gamma$ be an absolutely continuous function.
	Let $A$ be a weak$^*$ compact convex subset of $EL$, such that operators $Q^*$ and $R^*$ map $A$ into itself. We have 
	$${\rm card}\! \ ({\rm ext}\! \ A(f) ) \geqslant {\rm card}\! \ ({\rm ext}\! \ A).$$
\end{prop}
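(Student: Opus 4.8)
The plan is to apply Lemma \ref{lem1} with the choices $D = A$, $M = A(f)$, $f_1 = Q^*$ and $f_2 = R^*$; the conclusion of that lemma then reads precisely ${\rm card}\,({\rm ext}\, A(f)) \geqslant {\rm card}\,({\rm ext}\, A)$, which is the assertion. First I would record that $M = A(f)$ is a weak$^*$ compact convex set: the defining conditions $\gamma(\omega, t) = \omega(y_t) = f(t)$, $t \in [0,1]$, cut $A$ by weak$^*$-closed affine hyperplanes, so $A(f)$ is a weak$^*$-closed (hence weak$^*$ compact) convex subset of the compact set $A$, as required for $M$ in the lemma.

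Conditions (i) and (iii) are then immediate. For (i), $R^*$ is the adjoint of a bounded operator on $\ell_\infty$, hence linear (in particular affine) and weak$^*$-to-weak$^*$ continuous. For (iii), since $A(f) \subseteq A$ and $R^*$ maps $A$ into itself by hypothesis, we get $R^*(A(f)) \subseteq R^*(A) \subseteq A = D$.

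The substance is condition (ii), which demands, for every $\omega \in {\rm ext}\, A$, both $Q^*\omega \in A(f)$ and $R^* Q^* \omega = \omega$. For the first, $Q^*\omega \in A$ holds by hypothesis, so it remains to check $\gamma(Q^*\omega, \cdot) = f$; here $\gamma(Q^*\omega, t) = (Q^*\omega)(y_t) = \omega(Q y_t)$, and by Lemma \ref{justification} the sequence $Q y_t$ converges to $f(t)$, whence $\omega(Q y_t) = f(t)$ because $\omega \in EL$ agrees with the ordinary limit on convergent sequences. For the second identity I would compute $QR$: since $(Rx)_k = x_{n+2}$ is constant on each block $2^n+1 \leqslant k \leqslant 2^{n+1}$, one finds $(QRx)_{n+2} = s_n\, x_{n+2}$ with $s_n = \sum_{k=2^n+1}^{2^{n+1}} w_{n,k} = (Q\mathrm{1\negthickspace I})_{n+2}$, while the first two coordinates are left fixed. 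The crucial point, already established in the computation following Lemma \ref{justification}, is that $s_n \to 1$; hence $QRx - x \in c_0$ for every $x \in \ell_\infty$, so $\omega(QRx) = \omega(x)$ for every extended limit $\omega$. Equivalently $R^* Q^* \omega = (QR)^* \omega = \omega$ on all of $EL$, in particular on ${\rm ext}\, A$.

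Once (i)--(iii) are in place, Lemma \ref{lem1} delivers the inequality directly. I expect the main obstacle to lie entirely in condition (ii), which rests on two analytic facts: that $Q y_t$ converges to $f(t)$ (the content of Lemma \ref{justification}, and the very reason for the peculiar logarithmic partition $\{a_{n,m}\}$), and that the block-sums $s_n$ of the weights tend to $1$, so that $QR$ differs from the identity only by a null sequence and $R^*Q^*$ fixes every extended limit. Everything else in the argument is formal.
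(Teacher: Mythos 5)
Your proposal is correct and follows essentially the same route as the paper: the same application of Lemma~\ref{lem1} with $D=A$, $M=A(f)$, $f_1=Q^*$, $f_2=R^*$, with condition (ii) verified via Lemma~\ref{justification} for $Q^*\omega\in A(f)$ and via $QR-I:\ell_\infty\to c_0$ (the block sums of the weights tending to $1$) for $R^*Q^*\omega=\omega$. Your write-up is in fact slightly more explicit than the paper's on why $A(f)$ is weak$^*$ compact and on the block structure of $QR$, but there is no substantive difference.
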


\begin{proof}
	Since $A$ is weak$^*$ compact and convex, it is easy to check that $A(f)$ is weak$^*$ compact convex subset of $A$.
	
	We apply Lemma~\ref{lem1} for  $M=A(f)$, $D=A$, $f_1 = Q^*$ and $f_2=R^*$. The condition (i) of Lemma~\ref{lem1} is clearly satisfied.
	Further, we have
	\begin{align*}
		\left|(QRx)_{n+2}-x_{n+2}\right|&= \left| \left(\sum_{k=2^n+1}^{2^{n+1}}f'(a_{n,k-(2^n+1)}) \cdot \frac1{k\log 2}\right) x_{n+2}-x_{n+2}\right|\\
		&\le\left| \sum_{k=2^n+1}^{2^{n+1}}f'(a_{n,k-(2^n+1)}) \cdot \frac1{k\log 2}-1\right|\|x\|\\
		&\mathop{\rightarrow}\limits_{n\to\infty} \left|\int_0^1 f'(s) \, ds -1\right|\|x\|= \left|f(1) -f(0) -1\right|\|x\|=0,
	\end{align*}	
	since $f\in \Gamma$.
	Thus,
	$QR - I :\ell_\infty \to c_0$. 
	
	For every $\omega \in A$, we have $\omega\circ Q \in A$. Moreover, 
	$$\gamma(\omega\circ Q,t)=\gamma(Q y_t)=f(t),$$
	by Lemma \ref{justification}. 
	Hence, $\omega \circ Q \in A(f)$. Also, $\omega(QR - I)=0$ for every $\omega \in A$. Therefore, condition (ii) is satisfied.
	Condition (iii) is satisfied by assumption.
	The assertion follows from Lemma~\ref{lem1}. 
\end{proof}

\begin{cor}
	The cardinality of $EL(f)$ 
	equal to $2^\mathfrak{c}$ for every absolutely continuous $f\in \Gamma$.
\end{cor}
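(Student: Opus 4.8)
The plan is to derive the corollary directly from the preceding Proposition by taking $A = EL$, and then to combine it with a classical identification of $\operatorname{ext} EL$ together with a trivial cardinality upper bound. First I would check that $A = EL$ satisfies the hypotheses of the Proposition. The set $EL$ is the collection of positive, normalized functionals on $\ell_\infty$ that agree with the ordinary limit on the subspace $c$ of convergent sequences; it is a weak$^*$-closed, hence weak$^*$-compact, convex subset of the unit ball of $\ell_\infty^*$. It then remains to verify that $Q^*$ and $R^*$ map $EL$ into itself, i.e. that $\omega\circ Q,\ \omega\circ R \in EL$ whenever $\omega \in EL$. Since $f\in\Gamma$ is non-decreasing we have $f'\ge 0$ a.e., so the weights $w_{n,k}$ are non-negative and $Q\ge 0$; likewise $R\ge 0$, as it merely copies coordinates. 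The computation preceding the Proposition gives $(Q\mathrm{1\negthickspace I})_{n+2}\to 1$, so $Q\mathrm{1\negthickspace I}-\mathrm{1\negthickspace I}\in c_0$, and the same integral-sum/regularity argument shows $Qx\to L$ whenever $x\to L$; hence $\omega\circ Q$ is again positive, normalized, and equal to the limit on $c$. The analogous statements for $R$ are immediate from its definition. Thus $Q^*EL\subseteq EL$ and $R^*EL\subseteq EL$.

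Applying the Proposition with $A=EL$ and $M=EL(f)$ then yields
$$\operatorname{card}\big(\operatorname{ext} EL(f)\big) \ \ge\ \operatorname{card}\big(\operatorname{ext} EL\big).$$
Next I would identify $\operatorname{ext} EL$. Under the lattice isometry $\ell_\infty \cong C(\beta\mathbb N)$ a positive normalized functional corresponds to a Radon probability measure on $\beta\mathbb N$, and the requirement that $\omega$ annihilate $c_0$ forces this measure to be supported on the remainder $\beta\mathbb N\setminus\mathbb N$. Consequently $EL$ is affinely homeomorphic to the set of probability measures on $\beta\mathbb N\setminus\mathbb N$, whose extreme points are exactly the point masses $\delta_p$, $p\in\beta\mathbb N\setminus\mathbb N$, that is, the limits along free ultrafilters. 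Since $\operatorname{card}(\beta\mathbb N\setminus\mathbb N)=2^{\mathfrak c}$, we obtain $\operatorname{card}(\operatorname{ext} EL)=2^{\mathfrak c}$, and therefore $\operatorname{card}(\operatorname{ext} EL(f))\ge 2^{\mathfrak c}$.

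Finally, the inclusion $\operatorname{ext} EL(f)\subseteq EL(f)\subseteq \ell_\infty^*$ together with the trivial bound $\operatorname{card}(\ell_\infty^*)\le \mathfrak c^{\mathfrak c}=2^{\mathfrak c}$ gives $2^{\mathfrak c}\le \operatorname{card}(EL(f))\le 2^{\mathfrak c}$, whence $\operatorname{card}(EL(f))=2^{\mathfrak c}$, as claimed. I expect the step requiring the most care to be the verification that $Q^*$ preserves $EL$ — precisely the fact that $Q$ is a regular, positive, limit-preserving averaging operator with $Q\mathrm{1\negthickspace I}-\mathrm{1\negthickspace I}\in c_0$ — since the behaviour of $R$ is transparent and the cardinality of $\operatorname{ext} EL$ is classical.
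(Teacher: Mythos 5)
Your proposal is correct and follows essentially the same route as the paper: apply the Proposition with $A=EL$ after checking that $Q^*$ and $R^*$ preserve $EL$ (which, as you note, needs only positivity, $Q\mathrm{1\negthickspace I}-\mathrm{1\negthickspace I}\in c_0$, and limit preservation on $c$ --- no H\"older condition), and then invoke ${\rm card}({\rm ext}\, EL)=2^{\mathfrak c}$. The paper states these verifications as ``easy to see''; you merely supply the details, including the ultrafilter description of ${\rm ext}\, EL$ and the trivial upper bound ${\rm card}(\ell_\infty^*)\le 2^{\mathfrak c}$.
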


\begin{proof}
	It is easy to see that $Q^*$ and $R^*$ map $EL$ into itself. The result follows from the fact that ${\rm card}\! \ ({\rm ext}\! \ EL)=2^\mathfrak{c}.$ 
\end{proof}

\begin{cor}
	The cardinality of $\mathfrak B(f)$
	equal to $2^\mathfrak{c}$ for every absolutely continuous $f\in \Gamma$ with H\"older continuous derivative.
\end{cor}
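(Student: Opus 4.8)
The plan is to transfer the cardinality $2^{\mathfrak c}$ from $EL(f)$ to $\mathfrak B(f)$ by means of the adjoint $Q^*$. By the preceding corollary $\operatorname{card}(EL(f)) = 2^{\mathfrak c}$, and since $\mathfrak B(f) \subset \ell_\infty^*$ has cardinality at most $2^{\mathfrak c}$, it suffices to exhibit an injection of $EL(f)$ into $\mathfrak B(f)$; I take this injection to be $\omega \mapsto Q^*\omega = \omega \circ Q$. Rather than appealing to the Proposition above (which would require $R^*$ to map $\mathfrak B$ into itself, whereas $R$ destroys shift-invariance: for $x$ with $x_{n+2}=(-1)^n$ the sequence $R(T-I)x$ is block-alternating and does not almost converge to $0$), I argue directly with $Q^*$. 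The operator $R$ thus plays no role in this argument.

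The heart of the matter, and the only place where H\"older continuity of $f'$ is used, is the assertion that $Q(T-I):\ell_\infty \to c_0$. To see this I would first reindex the block sum, obtaining for the $n$-th dyadic block
$$(Q(T-I)x)_{n+2} = \sum_{k=2^n+2}^{2^{n+1}} (w_{n,k-1}-w_{n,k})\,x_k + w_{n,2^{n+1}}x_{2^{n+1}+1} - w_{n,2^n+1}x_{2^n+1}.$$
A short computation splits $w_{n,k-1}-w_{n,k}$ into one part governed by the increment of $f'$ at the consecutive partition points $a_{n,k-2^n-2}$ and $a_{n,k-2^n-1}$, and another coming from the difference of the factors $1/(k-1)$ and $1/k$. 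The partition mesh is $a_{n,m}-a_{n,m-1} = \frac{1}{(2^n+m)\log 2} = O(2^{-n})$, so if $f'$ is H\"older of exponent $\alpha$ then $|f'(a_{n,m-1})-f'(a_{n,m})| = O(2^{-n\alpha})$; together with $1/k = O(2^{-n})$ and $1/(k(k-1)) = O(2^{-2n})$ this gives $|w_{n,k-1}-w_{n,k}| = O(2^{-n(1+\alpha)})$. Summing the $\sim 2^n$ such terms yields $O(2^{-n\alpha})$, while each boundary weight is $O(2^{-n})$; hence $(Q(T-I)x)_{n+2} = O(2^{-n\alpha})\,\|x\|_{\ell_\infty} \to 0$. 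This is the main obstacle: absolute continuity alone does not suffice, since each block carries $\sim 2^n$ summands and one needs a genuine rate (beyond mere $o(1)$) on the oscillation of $f'$ to overcome this factor.

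With this estimate in hand the conclusion is routine. For $\omega \in EL(f)$ the functional $Q^*\omega$ is positive (as $Q \ge 0$) and normalised (as $Q\mathrm{1\negthickspace I}$ converges to $1$ and $\omega$ is an extended limit); it is shift-invariant because $Q(T-I)x \in c_0$ and every extended limit annihilates $c_0$, so that $\omega(QTx)=\omega(Qx)$; and $\gamma(Q^*\omega,t)=\omega(Qy_t)=f(t)$ by Lemma~\ref{justification}, since $Qy_t$ converges to $f(t)$. Thus $Q^*(EL(f)) \subseteq \mathfrak B(f)$. Finally $Q$ is surjective: given $y \in \ell_\infty$, set $x_1=y_1$, $x_2=y_2$, and let $x$ be constant equal to $y_{n+2}/s_n$ on the $n$-th block, where $s_n=\sum_{k=2^n+1}^{2^{n+1}} w_{n,k}\to 1$; since $\inf_n s_n>0$ this $x$ is bounded and $Qx=y$. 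Consequently $Q^*$ is injective on $\ell_\infty^*$, hence on $EL(f)$, and therefore $\operatorname{card}(\mathfrak B(f)) \ge \operatorname{card}(EL(f)) = 2^{\mathfrak c}$. The reverse inequality being trivial, equality follows.
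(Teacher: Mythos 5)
Your argument is correct in substance and it takes a genuinely different route from the paper. The paper deduces the corollary from Lemma~\ref{lem2} via the Proposition on ${\rm ext}\, A(f)$ (that is, via Lemma~\ref{lem1} applied with $A=\mathfrak B$, $f_1=Q^*$, $f_2=R^*$), which requires \emph{both} $Q^*$ and $R^*$ to map $\mathfrak B$ into itself; you discard $R$ altogether and obtain the lower bound $2^{\mathfrak c}$ by exhibiting $Q^*$ as an injection of a $2^{\mathfrak c}$-sized set of extended limits into $\mathfrak B(f)$, with injectivity coming from the (near) surjectivity of $Q$. The analytic core is the same in both proofs: the summation by parts of $(Q(I-T)x)_{n+2}$ and the estimate of $\sum_k|w_{n,k}-w_{n,k-1}|$ via the H\"older continuity of $f'$ and the mesh of the partition $\{a_{n,m}\}$; your quantitative bound $O(2^{-n\alpha})$ is sharper than, but serves the same purpose as, the paper's $o(n^{-\alpha})$. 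What your route buys is substantial, because your objection to $R$ is well founded: with $R$ as defined (constant on the dyadic blocks), $R(x-Tx)$ equals $x_{n+2}-x_{n+3}$ on the whole block $[2^n+1,2^{n+1}]$, not the sparsely supported sequence displayed in the proof of Lemma~\ref{lem2} (that display is essentially the computation of $(I-T)Rx$, which every Banach limit annihilates for trivial reasons). For $x_{n+2}=(-1)^n$ the resulting block-alternating sequence has oscillating Ces\`aro means and does not almost converge to $0$, so $B\circ R$ need not be shift-invariant and part 2 of Lemma~\ref{lem2} does not hold as written. Your proof therefore not only differs from the paper's but repairs a genuine defect in its chain of reasoning, while the conclusion of the corollary itself survives.

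One small point to patch: the claim $\inf_n s_n>0$ is not automatic, since $s_n=\sum_{k=2^n+1}^{2^{n+1}}w_{n,k}$ vanishes whenever $f'$ happens to vanish at every partition point $a_{n,0},\dots,a_{n,2^n-1}$ of the $n$-th block. Because $s_n\to 1$, this can occur for at most finitely many $n$; the range of $Q$ then contains every $y$ vanishing at the corresponding finitely many coordinates, and since extended limits do not distinguish sequences differing in finitely many places, $Q^*$ remains injective on $EL$. You could also simplify by injecting all of $EL$ rather than $EL(f)$: your own computation shows $\gamma(\omega\circ Q,t)=\omega(Qy_t)=f(t)$ for every $\omega\in EL$ by Lemma~\ref{justification}, so the restriction to $EL(f)$, and hence the appeal to the preceding corollary, is unnecessary.
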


\begin{proof}
	The result follows from Lemma~\ref{lem2} below and the fact that the cardinality of ${\rm ext}\! \ \mathfrak B$ is $2^\mathfrak{c}.$ 
\end{proof}

\begin{lem}\label{lem2} 
	Operator $R^*$ maps $\mathfrak B$ into itself. If $f\in \Gamma$ is absolutely continuous with H\"older continuous derivative, then the operator $Q^*$ maps $\mathfrak B$ into itself.
\end{lem}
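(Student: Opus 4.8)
The plan is to verify, for each operator $H\in\{R,Q\}$ and each $B\in\mathfrak B$, the three defining properties of a Banach limit for the functional $H^*B=B\circ H$: positivity, normalisation, and shift invariance. Positivity and normalisation are routine. Both $R$ and $Q$ are positive: $R$ has entries in $\{0,1\}$, while the weights $w_{n,k}=f'(a_{n,k-(2^n+1)})\cdot\frac{1}{k\log2}$ are nonnegative because an absolutely continuous non-decreasing $f$ has $f'\ge0$, and H\"older continuity turns $f'$ into a genuine nonnegative continuous function so that the sampled values $w_{n,k}$ are unambiguous; hence $H\ge0$, and $B\ge0$ forces $B\circ H\ge0$. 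For normalisation, $R\mathrm{1\negthickspace I}=\mathrm{1\negthickspace I}$ gives $B(R\mathrm{1\negthickspace I})=1$ at once, and $Q\mathrm{1\negthickspace I}$ is a convergent sequence with limit $f(1)=1$ (as computed just before the lemma), so $B(Q\mathrm{1\negthickspace I})=1$ because every Banach limit agrees with the ordinary limit on convergent sequences.

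The real content is shift invariance, i.e. the identity $B\bigl(H(I-T)x\bigr)=0$ for every $x\in\ell_\infty$. By Lorentz's theorem the common kernel $\bigcap_{B\in\mathfrak B}\ker B$ is exactly the set $ac_0$ of sequences almost converging to zero, so it suffices to establish the inclusion $H(I-T)\ell_\infty\subseteq ac_0$; for $H=Q$ the plan is to obtain the stronger inclusion into $c_0$.

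For $Q$ the key step is an Abel-summation estimate carried out block by block. Writing $d=(I-T)x$, one has $(Qd)_{n+2}=\sum_{k=2^n+1}^{2^{n+1}}w_{n,k}(x_k-x_{k+1})$; summing by parts leaves two boundary terms of order $2^{-n}$ together with an interior sum bounded in modulus by $\|x\|_{\ell_\infty}\sum_k|w_{n,k}-w_{n,k-1}|$. The factor $1/(k\log2)$ contributes a variation of order $2^{-n}$, and the remaining part is controlled by $2^{-n}\sum_j|f'(a_{n,j})-f'(a_{n,j-1})|$. This is precisely where H\"older continuity of $f'$ enters: since the partition mesh is $o(1/n)$ by \eqref{partitionmesh} (indeed of order $2^{-n}$), an exponent $\alpha>0$ bounds each increment by $O(2^{-n\alpha})$ and the full sum by $O(2^{n(1-\alpha)})$, so that the estimate tends to $0$. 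Consequently $(Qd)_{n+2}\to0$, that is $Qd\in c_0$, whence $B(Qd)=0$ and $Q^*$ maps $\mathfrak B$ into itself.

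For $R$ the corresponding step is the genuine difficulty, and this is the part I expect to be the main obstacle. The defect sequence $R(I-T)x$ is constant, equal to $x_{n+2}-x_{n+3}$, on the block $[2^n+1,2^{n+1}]$ whose length $2^n$ grows geometrically. Unlike a dilation $\sigma_m$, which satisfies the intertwining relation $\sigma_m T=T^m\sigma_m$ and therefore automatically sends Banach limits to Banach limits, the expansion $R$ turns the unit shift into a non-uniform block shift, so no intertwining identity is available to reduce shift invariance of $B\circ R$ to that of $B$. One is thus forced to control the uniform Ces\`aro averages of these block-constant sequences directly and to show that they vanish, i.e. that $R(I-T)\ell_\infty\subseteq ac_0$. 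A promising auxiliary observation is that the relation $QR-I:\ell_\infty\to c_0$ established in the Proposition yields $R^*(Q^*B)=B$ for every $B\in\mathfrak B$, so that $R^*$ at least carries the already-constructed image $Q^*(\mathfrak B)\subseteq\mathfrak B(f)$ back into $\mathfrak B$; confirming the full inclusion $R^*\mathfrak B\subseteq\mathfrak B$ requires confronting the geometric block structure head-on, and this is where the decisive estimate must be made.
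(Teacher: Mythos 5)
Your treatment of $Q^*$ is correct and is essentially the paper's own argument: positivity and normalisation are immediate, and shift invariance follows from $Q(I-T)\ell_\infty\subseteq c_0$ via the same block-wise Abel summation, with the boundary weights $w_{n,2^n+1}$, $w_{n,2^{n+1}-1}=O(2^{-n})$ and the interior variation $\sum_k|w_{n,k}-w_{n,k-1}|$ split into the H\"older increments of $f'$ over the partition (controlled by the mesh \eqref{partitionmesh}) and an $O\bigl(\sum 1/(k(k-1))\bigr)$ remainder. Your mesh bound $O(2^{-n})$ is in fact sharper than the $o(1/n)$ used in the paper, but both suffice.

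For $R^*$ you leave the decisive step open, and your instinct that this is the real obstruction is sound --- but the situation is worse than a missing estimate. The inclusion $R(I-T)\ell_\infty\subseteq ac_0$ that you correctly identify as what is needed is false, and with it the first assertion of the lemma. Indeed, $R(I-T)x$ is constant equal to $x_{n+2}-x_{n+3}$ on the entire block $[2^n+1,2^{n+1}]$. Take $x=(0,1,0,1,\dots)$, so that $x-Tx=2x-\mathrm{1\negthickspace I}$ and hence $R(x-Tx)=2Rx-\mathrm{1\negthickspace I}$; here $Rx$ consists (up to a set whose indicator is in $ac_0$) of alternating blocks of $0$'s and $1$'s of lengths $2^n$, i.e.\ the $V$-sequence with $n_k=2^k$ discussed after Lemma \ref{lemBCorbit}. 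By Sucheston's theorem, as quoted in the paper, $\{B(Rx):B\in\mathfrak B\}=[0,1]$, so choosing $B_0$ with $B_0(Rx)=0$ gives $B_0(R(x-Tx))=-1\neq 0$, i.e.\ $R^*B_0\notin\mathfrak B$. The paper's own proof does not resolve this: its displayed formula for $R(x-Tx)$, a sequence supported on the singletons $\{2^{n+1}-1\}$, is in fact (a version of) $(I-T)Rx$, whose membership in $ac_0$ merely restates the shift invariance of $B$, not of $B\circ R$. Your observation that $QR-I:\ell_\infty\to c_0$ yields $R^*\circ Q^*=I$ on $\mathfrak B$ correctly shows that $R^*$ carries $Q^*(\mathfrak B)$ back into $\mathfrak B$, but condition (iii) of the Proposition requires $R^*(\mathfrak B(f))\subseteq\mathfrak B$, which does not follow. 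So the gap you flag is genuine, and it lies in the statement and in the paper's proof rather than in your attempt.
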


\begin{proof}
	Let $B\in \mathfrak B$. It is clear that $B\circ Q$ and $B\circ R$ are positive and normalised.
	
	1. For every $n\in \mathbb N$ consider
	$$(Q(x-Tx))_{n+2} =\sum_{k=2^n+1}^{2^{n+1}} w_{n,k} (x_k-x_{k+1})$$
	$$=w_{n,2^n+1} x_{2^n+1}+\sum_{k=2^n+2}^{2^{n+1}-1} x_k (w_{n,k}-w_{n,k-1})- w_{n,2^{n+1}-1} x_{2^{n+1}}.$$
	Since $f'$ is H\"older continuous, then it is bounded on $[0,1]$.
	Hence, 
	$$w_{n,2^n+1}= \frac{f'(a_{n,0})}{2^n+1} \to 0\ \text{and} \ w_{n,2^{n+1}-1}= \frac{f'(a_{n,2^n-2})}{2^{n+1}-1} \to 0, \ n \to \infty.$$
	Further,
	
	\begin{align*}
		\sum_{k=2^n+2}^{2^{n+1}-1} &|w_{n,k}-w_{n,k-1}|=\sum_{k=2^n+2}^{2^{n+1}-1} \left|\frac{f'(a_{n,k-(2^n+1)})}{k}-\frac{f'(a_{n,k-1-(2^n+1)})}{k-1}\right|\\
		&\leqslant \sum_{k=2^n+2}^{2^{n+1}-1} \frac1{k-1}\left|f'(a_{n,k-(2^n+1)})-f'(a_{n,k-1-(2^n+1)})\right|\\
		&+\sum_{k=2^n+2}^{2^{n+1}-1} \frac{\left|f'(a_{n,k-(2^n+1)})\right|}{k(k-1)}
	\end{align*}

	Since $f'$ is H\"older continuous, then
	\begin{align*}
		\left|f'(a_{n,k-(2^n+1)})-f'(a_{n,k-1-(2^n+1)})\right|&\le C |a_{n,k-(2^n+1)}-a_{n,k-1-(2^n+1)}|^\alpha\\
		&= o(n^{-\alpha}), \ n \to \infty
	\end{align*}
	by \eqref{partitionmesh}. Thus,
	$$\sum_{k=2^n+2}^{2^{n+1}-1} |w_{n,k}-w_{n,k-1}|\leqslant o(n^{-\alpha}) \sum_{k=2^n+2}^{2^{n+1}-1} \frac1{k-1} +O\left(\sum_{k=2^n+2}^{2^{n+1}-1} \frac{1}{k(k-1)}\right),$$
	which tends to zero as  $n \to \infty$.
	Hence,
	$Q(x-Tx)\in c_0.$
	Thus,
	$(B\circ Q)(x-Tx)=0$ for every $B\in \mathfrak B$. Thus, $B\circ Q \in \mathfrak B$ for every $B\in \mathfrak B$.

	2. For every $x\in \ell_\infty$ we have
	\begin{align*}
		R(x-Tx)&=(x_1-x_2; 0, x_2-x_3; 0, 0, 0, x_3-x_4; \dots)\\
		&=\sum_{n=0}^\infty (x_{n+1}-x_{n+2})\chi_{\{2^{n+1}-1\}} \in ac_0.
	\end{align*}
Thus,
	$(B\circ R)(x-Tx)=0$ and $B\circ R \in \mathfrak B$ for every $B\in \mathfrak B$. 
\end{proof}

\end{document}